\numberwithin{equation}{section}
\newtheorem{theorem}{Theorem}[section]
\newtheorem{proposition}[theorem]{Proposition}
\newtheorem{lemma}[theorem]{Lemma}
\newtheorem{corollary}[theorem]{Corollary}
\theoremstyle{definition}
\newtheorem{definition}[theorem]{Definition}
\newtheorem{remark}[theorem]{Remark}
\newcommand{\R}{\mathbb{R}}
\newcommand{\N}{\mathbb{N}}
\newcommand{\Z}{\mathbb{Z}}
\newcommand{\supp}{\operatorname{supp}}
\newcommand{\vol}{\operatorname{vol}}
\newcommand{\diam}{\operatorname{diam}}
\newcommand{\eps}{\varepsilon}
\begin{document}

\title[Kuznecov formulae]{Kuznecov formulae for fractal measures}

\author{Yakun Xi}
\address{School of Mathematical Sciences, Zhejiang University, Hangzhou 310027, China}
\email{yakunxi@zju.edu.cn}

\begin{abstract}
Let $(M,g)$ be a compact, connected Riemannian manifold of dimension $n\ge 2$, and let
$\{e_j\}_{j=0}^\infty$ be an orthonormal basis of Laplace eigenfunctions
$-\Delta_g e_j=\lambda_j^2 e_j$. Given a finite Borel measure $\mu$ on $M$, consider the
Kuznecov sum
\[
  N_\mu(\lambda):=\sum_{\lambda_j\le \lambda}\Bigl|\int_M e_j\,d\mu\Bigr|^2.
\]
Assume that $\mu$ admits an averaged
$s$-density constant $A_\mu$ with correlation dimension $s\in(0,n)$. We prove that
\[
  N_\mu(\lambda)
  = (2\pi)^{-(n-s)}\,\vol(B^{\,n-s})\,A_\mu\,\lambda^{n-s}
    + o(\lambda^{n-s})
  \qquad (\lambda\to\infty).
\]
The averaged $s$-density condition is necessary for such a one-term asymptotic, and
in general, the remainder $o(\lambda^{n-s})$ is sharp in the sense that it cannot be
improved uniformly to a power-saving error term. This extends the classical Kuznecov
formula of Zelditch \cite{Zelditch92} for smooth submanifold measures to a broad class
of singular and fractal measures.
\end{abstract}

\maketitle
\section{Introduction}

Let $(M,g)$ be a compact, connected, smooth Riemannian manifold of dimension
$n\ge 2$, and let
\[
  -\Delta_g e_j = \lambda_j^2 e_j,\qquad
  0 = \lambda_0 <\lambda_1 \le \lambda_2 \le \cdots,
\]
denote a complete orthonormal basis of real Laplace eigenfunctions.  Given a
finite Borel measure $\mu$ on $M$, we consider the associated Kuznecov sum
\begin{equation}\label{eq:Kuz-sum}
  N_\mu(\lambda) := \sum_{\lambda_j\le\lambda} \Big|\int_M e_j\,d\mu\Big|^2,
  \qquad \lambda\ge 1.
\end{equation}

A basic and well-studied case is when $\mu$ is the Riemannian volume measure
on an embedded submanifold.  Let $H\subset M$ be an embedded submanifold of
dimension $k$ with induced Riemannian metric and volume measure $dV_H$.  In
his seminal work \cite{Zelditch92}, Zelditch proved a Kuznecov sum formula
for these coefficients, showing that
\begin{equation}\label{eq:Zelditch}
  \sum_{\lambda_j\le\lambda}
  \biggl|\int_H e_j\,dV_H\biggr|^2
  = (2\pi)^{-(n-k)} \,\vol(B^{n-k})\,\vol(H)\,\lambda^{\,n-k}
    + O(\lambda^{n-k-1}),
\end{equation}
where $B^{n-k}\subset\R^{n-k}$ is the Euclidean unit ball and
$\vol(H)$ denotes the $k$-dimensional Riemannian volume of $H$.  Thus the
leading growth of the Kuznecov sum is governed by the ambient dimension $n$,
the submanifold dimension $k$, and an explicit geometric factor.

More recently, Wyman and the author \cite{WymanXiKuz} refined Zelditch's formula by
obtaining a two-term asymptotic for \eqref{eq:Zelditch}, identifying the
next-order oscillatory term.  In these works the measure is always smooth and
supported on a submanifold.  On the other hand, there has been growing recent interest
in the behavior of eigenfunctions restricted to general Borel sets.  For example,
Eswarathasan and Pramanik \cite{EswarathasanPramanik22} initiated a program to study
the $L^p$ norms of eigenfunctions with respect to a fractal measure $\mu$, connecting
classical results of Sogge \cite{sogge1988concerning} and Burq--G\'erard--Tzvetkov
\cite{BurqGerardTzvetkov07}.  An essentially sharp version of such bounds in two
dimensions was recently obtained by Gao, Miao, and the author \cite{GaoMiaoXi24}.
The goal of this paper is to extend Zelditch's Kuznecov formulae to a
much broader class of measures, including singular and fractal measures, while
retaining the same leading term.

Throughout the paper we use the following convention for Euclidean ball
volumes.  For $s\in \mathbb R^+$, we write $\vol(B^s)$ for
the “$s$-dimensional volume’’ of the Euclidean unit ball in formal dimension
$s$, defined by
\[
  \vol(B^s)
  := \frac{\pi^{s/2}}{\Gamma\bigl(\frac{s}{2}+1\bigr)}.
\]
When $s\in\Z^+$, this coincides with the usual Lebesgue $s$-volume of the unit
ball in $\R^s$.  For an embedded $k$-dimensional submanifold $H\subset M$ we
write $\vol(H)$ for its $k$-dimensional Riemannian volume.

We now describe the small-scale hypotheses on $\mu$ that will play the role of
``dimension'' and ``volume'' in our setting. Throughout, $B(x,r)$ denotes the
geodesic ball of radius $r$ centered at $x\in M$ with respect to the Riemannian
distance $d_g$.

For $r\ge 0$ define the distance distribution
\begin{equation}\label{eq:distance-distribution}
  F(r)
  :=(\mu\times\mu)\bigl(\{(x,y)\in M\times M:\ d_g(x,y)\le r\}\bigr)
  =\int_M \mu\bigl(B(x,r)\bigr)\,d\mu(x).
\end{equation}
The small-scale growth rate of $F(r)$ encodes the pair statistics of $\mu$ and
leads naturally to the \emph{correlation dimension} of $\mu$, see for instance
\cite{MattilaMoranRey00}.

\begin{definition}[Correlation dimension]\label{def:corr-dim}
Let $\mu$ be a finite Borel measure on $M$. If the following limit exists, we
define the \emph{correlation dimension} of $\mu$ to be
\begin{equation}\label{eq:corr-dim}
  \dim_C(\mu):=\lim_{r\to0^+}\frac{\log F(r)}{\log r}.
\end{equation}

\end{definition}

To obtain a one-term asymptotic for \eqref{eq:Kuz-sum}, it is necessary to assume
that the small-scale behavior of $F(r)$ stabilizes in the following sense.

\begin{definition}[Averaged $s$-density]\label{def:DD}
Let $s\in(0,n)$. We say that $\mu$ admits an \emph{averaged $s$-density} constant $A_\mu\in(0,\infty)$ if \begin{equation}\label{eq:DD}
  F(r)=\vol(B^s)\,A_\mu\,r^s+o(r^s)\qquad (r\to0^+).
\end{equation}
Equivalently,
\begin{equation}\label{eq:A-mu}
  A_\mu
  =\lim_{r\to0^+}\frac{F(r)}{\vol(B^s)\,r^s}
  =\lim_{r\to0^+}\frac{1}{\vol(B^s)\,r^s}\int_M \mu(B(x,r))\,d\mu(x).
\end{equation}
\end{definition}

When \eqref{eq:DD} holds, it forces $\dim_C(\mu)=s$, and $A_\mu$ is the
corresponding averaged $s$-dimensional density constant arising from the distance 
distribution.

We can now state our main result. It shows that once the distance distribution
$F(r)$ admits a stabilized small-scale asymptotic in the sense of \eqref{eq:DD},
the Kuznecov sum \eqref{eq:Kuz-sum} has a one-term asymptotic with exponent $n-s$
and an explicit leading constant.

\begin{theorem}\label{thm:main}
Let $(M,g)$ be a compact, connected Riemannian manifold of dimension $n\ge 2$.
Fix $s\in(0,n)$ and let $\mu$ be a finite Borel measure on $M$ admitting an
averaged $s$-density constant $A_\mu$ in the sense of \eqref{eq:DD}. Then
\begin{equation}\label{eq:Kuz-main}
  N_\mu(\lambda)
  =C_{n,s}\,A_\mu\,\lambda^{\,n-s}
    + o(\lambda^{\,n-s})
  \qquad (\lambda\to\infty),
\end{equation}
where
\[
  C_{n,s}=(2\pi)^{-(n-s)}\,\vol(B^{\,n-s}).
\]
\end{theorem}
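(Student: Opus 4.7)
\emph{Plan.} The strategy will be the Fourier--Tauberian method based on the half-wave propagator and H\"ormander's parametrix, in the spirit of Zelditch's proof of \eqref{eq:Zelditch}. Pick an even $\rho\in\mathcal{S}(\R)$ with $\int\rho=1$ and $\supp\hat\rho\subset(-\delta,\delta)$, where $\delta$ is smaller than the injectivity radius of $(M,g)$, and form the smoothed spectral density
\[
  \tilde N'_\mu(\lambda):=\sum_j \rho(\lambda-\lambda_j)\Bigl|\int_M e_j\,d\mu\Bigr|^{2}.
\]
Fourier inversion rewrites this as
\[
  \tilde N'_\mu(\lambda)=\frac{1}{2\pi}\int \hat\rho(t)\,e^{it\lambda}\Bigl[\iint_{M\times M} U(t,x,y)\,d\mu(x)\,d\mu(y)\Bigr]\,dt,
\]
where $U(t,x,y)$ is the Schwartz kernel of $e^{-it\sqrt{-\Delta_g}}$. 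By finite speed of propagation, only pairs with $d_g(x,y)\le\delta$ enter, and on this region the H\"ormander parametrix
\[
  U(t,x,y)\equiv(2\pi)^{-n}\int_{\R^{n}} e^{i(\langle\exp_x^{-1}(y),\xi\rangle-t|\xi|_{g^{*}_{x}})}\,a(t,x,y,\xi)\,d\xi\pmod{C^\infty}
\]
applies, with $a$ a classical symbol of order $0$ and $a(0,x,x,\xi)\equiv 1$.

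After substituting and performing the $t$-integral---which produces $\rho(\lambda-|\xi|_{g^{*}_{x}})$ and localizes $|\xi|$ within a unit window of $\lambda$---polar coordinates $\xi=r\omega$ and extraction of the principal symbol yield the leading representation
\[
  \tilde N'_\mu(\lambda)=\frac{\lambda^{n-1}}{(2\pi)^{n}}\iint_{M\times M} G\bigl(\lambda\,d_g(x,y)\bigr)\,d\mu(x)\,d\mu(y)+o(\lambda^{n-1-s}),
\]
where $G(u)=\int_{S^{n-1}}e^{iu\omega_{1}}\,d\sigma(\omega)=(2\pi)^{n/2}u^{-(n-2)/2}J_{(n-2)/2}(u)$ is the Fourier transform of surface measure on the Euclidean unit sphere. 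Rewriting the double integral as the Stieltjes integral $\int_{0}^{\infty}G(\lambda r)\,dF(r)$ and rescaling $u=\lambda r$, the averaged density hypothesis formally gives
\[
  \int_{0}^{\infty}G(\lambda r)\,dF(r)\sim s\,\vol(B^{s})\,A_\mu\,\lambda^{-s}\int_{0}^{\infty}u^{s-1}G(u)\,du.
\]
The remaining integral is the classical Mellin transform of $J_{(n-2)/2}$, and after simplifying the resulting constants via $s\,\vol(B^{s})\Gamma(s/2)=2\pi^{s/2}$ and $\vol(B^{n-s})=\pi^{(n-s)/2}/\Gamma((n-s)/2+1)$, the prefactor matches $(n-s)\,C_{n,s}\,A_\mu$. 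A standard Fourier--Tauberian argument---using the monotonicity of $N_\mu$ and the a priori bound $N_\mu(\lambda)\lesssim \lambda^{n-s}$, itself a consequence of $s$-Ahlfors regularity---then upgrades this smoothed asymptotic to $N_\mu(\lambda)=C_{n,s}A_\mu\lambda^{n-s}+o(\lambda^{n-s})$.

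The main technical obstacle will be rigorously justifying the Stieltjes asymptotic above: when $s\le (n-1)/2$, $G$ is only conditionally integrable on $(0,\infty)$, so one cannot transport the pointwise $o(r^s)$ error through the oscillatory integral by dominated convergence. I would resolve this by a three-zone decomposition: an inner zone $\lambda r\le R$ in which the averaged density extracts the leading term after first sending $\lambda\to\infty$ and then $R\to\infty$; a middle zone $R\le\lambda r\le\lambda\delta$ in which integration by parts against the uniform Ahlfors bound $F(r)\lesssim r^s$ bounds the oscillatory contribution by a quantity vanishing as $R\to\infty$; and a tail $r\ge\delta$ handled by boundedness of $F$ and the decay of $G$. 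The remaining tasks---showing that the subprincipal symbols of the parametrix and the curvature corrections to the phase contribute only $o(\lambda^{n-1-s})$ to $\tilde N'_\mu$ (again by $s$-Ahlfors control of the $\mu\times\mu$-measure of thin tubes about the diagonal), and executing the final Tauberian step---are routine by comparison.
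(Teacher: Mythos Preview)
Your approach---half-wave parametrix plus a Fourier--Tauberian step---is genuinely different from the paper's, which uses a heat-kernel regularization $H_\mu(t)=\sum_j e^{-t\lambda_j^2}|\alpha_j|^2=\iint p_t\,d\mu\,d\mu$, reduces via the Minakshisundaram--Pleijel parametrix to the Gaussian average $\int_0^{\diam M} e^{-r^2/4t}\,dF(r)$, reads off the small-$t$ asymptotic directly from \eqref{eq:DD}, and then applies Karamata's Tauberian theorem. The heat route is chosen precisely because the weight $e^{-r^2/4t}$ is positive and super-polynomially decaying, so the passage from $F(r)=\vol(B^s)A_\mu r^s+o(r^s)$ to the asymptotic of the Stieltjes integral is elementary and works uniformly for all $s\in(0,n)$.

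Your sketch, by contrast, has a real gap in the middle-zone step once $s\ge (n-1)/2$ (your inequality is reversed: the Mellin-type integral $\int_0^\infty u^{s-1}G(u)\,du$ converges absolutely exactly when $s<(n-1)/2$). In the problematic range the Bessel kernel obeys only $|G(u)|\lesssim u^{-(n-1)/2}$, so the dyadic/Ahlfors bound gives
\[
\lambda^{s}\int_{R/\lambda}^{\delta}|G(\lambda r)|\,dF(r)
\ \lesssim\ \sum_{2^k\ge R} (2^k)^{\,s-(n-1)/2},
\]
which diverges. One integration by parts does not rescue this: it produces $\lambda\int G'(\lambda r)F(r)\,dr$ with $G'$ of the same $u^{-(n-1)/2}$ decay, and you cannot iterate because $F$ (and the error $\mathcal E=F-\vol(B^s)A_\mu r^s$) is merely monotone, not differentiable. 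In Zelditch's smooth-submanifold proof this is exactly where one exploits the smoothness of $H$ to run stationary phase in the $(x,y,\omega)$ variables rather than push forward to $F$; that mechanism is unavailable for fractal $\mu$. The same obstruction undercuts your claimed a priori bound $N_\mu(\lambda)\lesssim\lambda^{n-s}$ from the wave kernel alone---the pointwise bound $|K_\lambda(x,y)|\lesssim\lambda^{n-1}(1+\lambda d_g(x,y))^{-(n-1)/2}$ for a unit-band projector, integrated against $dF$ with only the Ahlfors upper bound, again diverges for $s\ge(n-1)/2$. So as written the argument closes only in the regime $s<(n-1)/2$; for larger $s$ you would need an additional idea (e.g.\ a positive-kernel regularization at some stage), which is essentially what the paper supplies via the heat kernel.
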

The proof combines a heat-kernel regularization of the Kuznecov sum with a
Tauberian argument, with the leading term determined by the small-scale distance
distribution of $\mu$.
Combining \eqref{eq:Kuz-main} with Weyl's law for the eigenvalue
counting function shows that the Fourier coefficients $\int_M e_j\,d\mu$ have a
quantitative mean-square decay on average. The precise statement is recorded
in the following corollary.

\begin{corollary}[Average Fourier decay]\label{cor:avg-coeff}
Under the hypotheses of Theorem \ref{thm:main}, we have
\[
  \frac{1}{\#\{j:\lambda_j\le\lambda\}}
  \sum_{\lambda_j\le\lambda}\Bigl|\int_M e_j\,d\mu\Bigr|^2
  \approx \lambda^{-s},
  \qquad (\lambda\to\infty).
\]
\end{corollary}

If one wishes to interpret the exponent $s$ more geometrically, a natural
setting is the class of Ahlfors--David regular measures. Recall that a finite
Borel measure $\mu$ on $M$ is \emph{$s$-Ahlfors--David regular} if there exist
constants $c,C>0$ and $r_0>0$ such that
\begin{equation}\label{eq:Ahlfors}
  c\,r^s \le \mu(B(x,r)) \le C\,r^s,
  \qquad x\in\supp\mu,\ 0<r\le r_0.
\end{equation}
For such measures it is standard that the Hausdorff dimension of $\mu$ equals
$s$. Thus, within the Ahlfors--David regular class, the exponent in
\eqref{eq:Kuz-main} may be written as $n-\dim_H(\mu)$.

\begin{remark}\label{rem:density-implies-DD}
The averaged $s$-density condition \eqref{eq:DD} is automatically satisfied under
a stronger pointwise density hypothesis, together with a uniform upper $s$-ball
condition. Suppose that there exist $C>0$ and $r_0>0$ such that
$\mu(B(x,r))\le C r^s$ for all $x\in\supp\mu$ and $0<r\le r_0$, and that the
pointwise $s$-density
\[
  \theta(x)=\lim_{r\to0^+}\frac{\mu(B(x,r))}{\vol(B^s)\,r^s}
\]
exists for $\mu$-a.e.\ $x$. Then dominated convergence implies
\[
  \frac{F(r)}{\vol(B^s)\,r^s}
  =\int_M \frac{\mu(B(x,r))}{\vol(B^s)\,r^s}\,d\mu(x)
  \longrightarrow \int_M \theta(x)\,d\mu(x),
  \qquad (r\to0^+),
\]
so \eqref{eq:DD} holds with $A_\mu=\int_M \theta\,d\mu$. In particular,
\eqref{eq:DD} is strictly weaker than the existence of a pointwise $s$-density.
Indeed, by a theorem of Preiss \cite{Preiss87}, if a (Radon) measure has an
$s$-density $\theta$ with $0<\theta(x)<\infty$ for $\mu$-a.e.\ $x$, then
necessarily $s\in\N$ and $\mu$ is $s$-rectifiable. This integer-dimensional
rigidity does not apply to the averaged density condition \eqref{eq:DD}, which
allows genuinely non-integer $s$. See Proposition \ref{prop:nonlattice-example}.
\end{remark}

In the special case where $\mu=dV_H$ is the Riemannian volume measure on an
embedded submanifold $H\subset M$ of dimension $s$, $\mu$ admits a pointwise
$s$-density $\theta$ and hence satisfies the averaged $s$-density condition
\eqref{eq:DD} by Remark \ref{rem:density-implies-DD}. In fact $\theta\equiv 1$
and $A_\mu=\vol(H)$, so \eqref{eq:Kuz-main} reduces to
\[
  \sum_{\lambda_j\le\lambda}
  \biggl|\int_H e_j\,dV_H\biggr|^2
  = (2\pi)^{-(n-s)}\,\vol(B^{\,n-s})\,\vol(H)\,\lambda^{\,n-s}
    + o(\lambda^{\,n-s}),
\]
whose leading term agrees with Zelditch's Kuznecov formula \eqref{eq:Zelditch}
with $k=s$.

More generally, Theorem \ref{thm:main} implies that the same conclusion holds
when $H$ has only weak regularity. For instance, suppose that $H\subset M$ is a
compact embedded Lipschitz $s$-dimensional submanifold with Lipschitz boundary.
Let $\mu$ be the $s$-dimensional Hausdorff measure induced by $d_g$ on $H$. Then
$\mu$ admits a pointwise $s$-density $\theta\equiv 1$ for $\mu$-a.e.\ $x\in H$. Hence $\mu$ satisfies
\eqref{eq:DD} and Theorem \ref{thm:main} yields the same leading asymptotic with
$A_\mu=\vol(H)$. This provides a Kuznecov asymptotic at Lipschitz regularity,
whereas the classical results for submanifold measures assume that $H$ is smooth
and boundaryless.

In summary, Theorem \ref{thm:main} extends the classical Kuznecov formulae to a
broad class of singular and fractal measures. To the best of our knowledge,
Theorem \ref{thm:main} provides the first Kuznecov-type asymptotic for fractal
measures.

Moreover, as discussed in \cite{wyman2023can}, the left-hand side of Zelditch's
Kuznecov formula \eqref{eq:Zelditch} can be viewed as ``audible'' spectral
information associated with the submanifold $H$, while the right-hand side
encodes geometric information about $H$, namely its dimension $k$ and its
$k$-dimensional Riemannian volume $\vol(H)$. Theorem \ref{thm:main} extends this
audible--geometric correspondence to measures $\mu$ whose distance distribution
admits an averaged $s$-density constant $A_\mu$ in the sense of \eqref{eq:DD}. In
this case the correlation dimension $s$ and the effective ``volume'' $A_\mu$ are
audible through the leading asymptotics of $N_\mu(\lambda)$. If, in addition,
$\mu$ is $s$-Ahlfors--David regular, then
its Hausdorff dimension is also audible.

\smallskip

For $0<u<n$ let
\begin{equation}\label{eq:Riesz-energy}
  I_u(\mu) := \iint d_g(x,y)^{-u}\,d\mu(x)\,d\mu(y)
\end{equation}
denote the Riesz energy of $\mu$ with respect to the Riemannian distance
$d_g(x,y)$ on $M$. There is an earlier work of Hare and Roginskaya
\cite{HareRoginskaya03} which is closely related in spirit to our results. They
established energy formulae on compact Riemannian manifolds which relate
$I_u(\mu)$ to weighted sums of the projections of $\mu$ onto Laplace eigenspaces.
In Section \ref{sec:HR} we show that their identities imply the following
general upper bound, without assuming the existence of an averaged $s$-density
constant.

\begin{theorem}\label{thm:HR-bound}
Let $(M,g)$ be a compact Riemannian manifold of dimension $n\ge2$, and let
$\mu$ be a finite Borel measure on $M$. Fix $u\in(0,n)$ and suppose that the
Riesz energy $I_u(\mu)$ defined in \eqref{eq:Riesz-energy} is finite.
Then there exists a constant $C_1=C_1(M,g,u,\mu)$ such that
\[
  \sum_{\lambda_j\le\lambda} \Bigl|\int_M e_j\,d\mu\Bigr|^2
  \le C_1\,\lambda^{\,n-u}
  \qquad (\lambda\ge1).
\]
In particular, if $\mu$ has correlation dimension $\dim_C(\mu)=s$ for some
$s\in(0,n)$, then $I_u(\mu)<\infty$ for every $u<s$, and hence for each
$\varepsilon>0$ there exists a constant $C_2=C_2(M,g,s,\varepsilon,\mu)$ such
that
\[
  \sum_{\lambda_j\le\lambda} \Bigl|\int_M e_j\,d\mu\Bigr|^2
  \le C_2\,\lambda^{\,n-s+\varepsilon}
  \qquad (\lambda\ge1).
\]
\end{theorem}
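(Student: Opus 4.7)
The plan is to deduce Theorem~\ref{thm:HR-bound} from a Hare--Roginskaya-style spectral identity for the Riesz energy, followed by an elementary weighting argument. Writing $\hat\mu(j):=\int_M e_j\,d\mu$, the goal of the first step is the one-sided comparison
\[
  \sum_{j} (1+\lambda_j)^{-(n-u)}\,|\hat\mu(j)|^2
  \;\le\; C(M,g,u)\bigl(I_u(\mu)+\mu(M)^2\bigr).
\]
Once this is in hand, the trivial inequality $(1+\lambda_j)^{n-u}\le (1+\lambda)^{n-u}\lesssim \lambda^{n-u}$, valid for $\lambda_j\le\lambda$ and $\lambda\ge 1$, gives, after multiplying each summand by $(1+\lambda_j)^{n-u}$ and summing,
\[
  \sum_{\lambda_j\le\lambda} |\hat\mu(j)|^2
  \;\le\; \lambda^{n-u}\sum_j (1+\lambda_j)^{-(n-u)}|\hat\mu(j)|^2
  \;\le\; C_1(M,g,u,\mu)\,\lambda^{n-u},
\]
which is the first assertion.

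To establish the spectral comparison, I would identify $d_g(x,y)^{-u}$ with the leading diagonal singularity of the Bessel potential kernel of $(-\Delta_g+1)^{-(n-u)/2}$. Using subordination,
\[
  (-\Delta_g+1)^{-(n-u)/2}
  =\frac{1}{\Gamma\bigl((n-u)/2\bigr)}\int_0^\infty t^{(n-u)/2-1}e^{-t}e^{t\Delta_g}\,dt,
\]
together with the standard small-time parametrix $K_t(x,y)\sim (4\pi t)^{-n/2}e^{-d_g(x,y)^2/(4t)}$, shows that the Schwartz kernel of this operator is pointwise dominated by $c_1\,d_g(x,y)^{-u}+c_2$ on all of $M\times M$. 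Pairing with $\mu\otimes\mu$ and expanding spectrally---the interchange being justified by Tonelli, since all quantities are non-negative and $I_u(\mu)<\infty$ by hypothesis---yields
\[
  \sum_j (1+\lambda_j^2)^{-(n-u)/2}\,|\hat\mu(j)|^2
  \;\le\; c_1\,I_u(\mu)+c_2\,\mu(M)^2,
\]
and the elementary bound $(1+\lambda_j^2)^{-(n-u)/2}\ge (1+\lambda_j)^{-(n-u)}$ converts this into the stated form. This is essentially the type of energy identity derived in \cite{HareRoginskaya03}, which I would quote directly to avoid redoing the parametrix analysis.

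For the second statement, I would verify that $s$-Ahlfors regularity guarantees $I_u(\mu)<\infty$ for every $0<u<s$ and apply the first part with $u:=s-\varepsilon$. The finiteness is a standard layer-cake calculation: for $x\in\supp\mu$,
\[
  \int_M d_g(x,y)^{-u}\,d\mu(y)
  = u\int_0^\infty r^{-u-1}\mu(B(x,r))\,dr,
\]
and \eqref{eq:Ahlfors} bounds $\mu(B(x,r))\le Cr^s$ for $r\le r_0$ and $\mu(B(x,r))\le \mu(M)$ otherwise; both contributions are integrable precisely when $0<u<s$, uniformly in $x$. A final integration in $x$ delivers $I_u(\mu)<\infty$. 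The main obstacle I anticipate is purely bookkeeping: ensuring on a compact manifold that the near-diagonal behaviour of the Bessel potential kernel matches the Euclidean one cleanly, with all lower-order pieces absorbed into the bounded remainder $c_2$ above. Since the paper explicitly cites \cite{HareRoginskaya03} for precisely this spectral comparison, I expect this to reduce to invoking their identity rather than reproving it.
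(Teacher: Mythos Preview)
Your proposal is correct and matches the paper's argument essentially line for line: both invoke the Hare--Roginskaya energy comparison (the paper's Proposition~\ref{prop:HR-energy}) as a black box, then use the monotonicity of the weight $(1+\lambda_j^2)^{-(n-u)/2}$ to extract the Kuznecov bound, and finish with a layer-cake estimate for $I_u(\mu)$ under $s$-Ahlfors regularity. The only cosmetic differences are that you work with $(1+\lambda_j)^{-(n-u)}$ rather than $(1+\lambda_j^2)^{-(n-u)/2}$ and do the layer-cake pointwise in $x$ rather than through the distance distribution $F(r)$; neither changes the substance.
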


Thus, for measures with finite Riesz energy, one also has nontrivial
polynomial upper bounds on the Kuznecov sum.  However, in the formulae of
Hare--Roginskaya the exponent $u$ is fixed, and the constants appearing in
Theorem \ref{thm:HR-bound} are not controlled uniformly as $u\to s^-$.  Since
our proof of Theorem \ref{thm:main} requires a precise description of the
singular behavior of $I_u(\mu)$ at $u=s$ in order to recover the leading
constant in \eqref{eq:Kuz-main}, one cannot simply invoke their identities
and pass to the limit $u\to s$.

\subsection*{Sharpness}\label{subsec:intro-sharpness}

We record two statements, proved in Section \ref{sec:sharpness}, which together
explain why the averaged $s$-density hypothesis \eqref{eq:DD} in
Theorem \ref{thm:main} is essentially optimal for a one-term asymptotic, and why
the remainder $o(\lambda^{n-s})$ in \eqref{eq:Kuz-main} cannot be improved
uniformly.

Even for Ahlfors--David regular measures, a genuine one-term asymptotic requires
stabilization of the small-scale geometry of $\mu$. The next proposition shows
that without \eqref{eq:DD}, a one-term asymptotic need not hold.

\begin{proposition}[Necessity of averaged $s$-density]
\label{prop:need-DD}
There exists an $s$-Ahlfors--David regular measure $\mu$ for which the quantity
$N_\mu(\lambda)/\lambda^{n-s}$ fails to converge as $\lambda\to\infty$.
\end{proposition}

Beyond these structural issues, we also show that the remainder
$o(\lambda^{n-s})$ in \eqref{eq:Kuz-main} is essentially optimal at this level of
generality: there is no uniform power-saving improvement valid even over the
Ahlfors--David regular subclass.

\begin{proposition}
    [Sharpness of the remainder]
\label{thm:no-uniform-ps}
Fix $\delta\in(0,1)$. There exists an $s$-Ahlfors--David regular measure $\mu$
admitting an averaged $s$-density constant $A_\mu$ such that
\[
  N_\mu(\lambda)
  =C_{n,s}A_\mu\,\lambda^{n-s}+O(\lambda^{n-s-\delta})
\]
fails to hold.
\end{proposition}

\subsection*{Organization}
In Section \ref{sec:heat} we introduce a heat-kernel regularization of the
Kuznecov sum and establish its small-time asymptotics in terms of the averaged
$s$-density constant $A_\mu$. Section \ref{sec:Tauberian} interprets this
smoothed Kuznecov sum as a Laplace--Stieltjes transform and applies a Tauberian
theorem to prove Theorem \ref{thm:main} and identify the constant $C_{n,s}$.
In Section \ref{sec:HR} we revisit the energy identities of Hare and Roginskaya
to derive general polynomial upper bounds for $N_\mu(\lambda)$, proving
Theorem \ref{thm:HR-bound}. Finally, Section \ref{sec:sharpness} is devoted to
sharpness and examples: we give examples showing that the averaged $s$-density hypothesis in
Theorem \ref{thm:main} is essentially optimal for a genuine one-term asymptotic,
and that the $o(\lambda^{n-s})$ remainder cannot, in general, be improved
uniformly to a power-saving error term.

\subsection*{Acknowledgements} This project is supported by the National Key R\&D Program of China under Grant No. 2022YFA1007200, the Natural Science Foundation of China under Grant No. 12571107, and the Zhejiang Provincial Natural Science Foundation of China under Grant No. LR25A010001. The author thanks Shi-Lei Kong for some helpful discussions. The author thanks Xucheng Hu and Diankun Liu for carefully reading an earlier draft of this paper.

\subsection*{Notation}
We write $A\lesssim B$ to mean that there exists an implicit constant $C>0$ such that
\[
  A\le C\,B.
\]
 We write
$A\gtrsim B$ if $B\lesssim A$, and $A\approx B$ if both $A\lesssim B$ and $B\lesssim A$.

For nonnegative functions $f,g$ and  $x_0\in\mathbb R\cup\{\infty\}$ we write
\[
  f(x)\sim g(x)\qquad (x\to x_0)
\]
to mean
\[
  \lim_{x\to x_0}\frac{f(x)}{g(x)}=1.
\]

\section{Heat kernel and a smoothed Kuznecov sum}\label{sec:heat}

In this section, we introduce a heat-kernel-regularized Kuznecov sum and
identify its small-time behavior in terms of the averaged $s$-density constant $A_\mu$. 

\subsection{Heat kernel asymptotics near the diagonal}

Throughout we write $p_t(x,y)$ for the heat kernel on $(M,g)$:
\[
  (\partial_t - \Delta_g)p_t(x,y) = 0,\qquad
  \lim_{t\to0^+}p_t(\,\cdot\,,y) = \delta_y,
\]
so that for every $f\in C^\infty(M)$,
\[
  e^{t\Delta_g}f(x)
  = \int_M p_t(x,y)f(y)\,dV(y)
  = \sum_{j=0}^\infty e^{-t\lambda_j^2}\langle f,e_j\rangle e_j(x).
\]

The small-time behavior of $p_t$ near the diagonal is classical. See, for
example, \cite[Ch. 2]{BGV92} or \cite[Ch. 8]{Grigoryan09}.

\begin{lemma}\label{lem:heat-local-expansion}
There exist $r_0>0$, $t_0>0$, and smooth functions $u_k(x,y)$ on
$\{(x,y)\in M\times M: d_g(x,y)< 2r_0\}$, $k=0,1,2,\dots$, with
$u_0(x,x)\equiv1$, such that for every $N\in\Z^+$ we have
\begin{equation}\label{eq:heat-local-expansion}
  p_t(x,y)
  = (4\pi t)^{-n/2} e^{-d_g(x,y)^2/(4t)}
  \biggl(\sum_{k=0}^{N-1} t^k u_k(x,y)\biggr)
  + R_N(t,x,y),
\end{equation}
and
\[
  |R_N(t,x,y)|
  \le C_N\,t^{N-n/2}
\]
for all $0<t\le t_0$ and all $x,y\in M$ with $d_g(x,y)<r_0$.
Moreover, there exist constants $c,C>0$ such that
\begin{equation}\label{eq:heat-off-diagonal}
  0\le p_t(x,y) \le C\,t^{-n/2}\exp\Bigl(-\frac{d_g(x,y)^2}{ct}\Bigr),
  \qquad t\in(0,1],\ x,y\in M.
\end{equation}
\end{lemma}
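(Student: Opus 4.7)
The plan is to invoke the standard Minakshisundaram--Pleijel parametrix construction near the diagonal, and upgrade it to the true heat kernel via Duhamel's principle; the global Gaussian bound \eqref{eq:heat-off-diagonal} is then a separate, essentially soft, consequence.

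First I would choose $r_0$ smaller than the injectivity radius of $(M,g)$, so that Riemannian normal coordinates centered at any $y\in M$ are defined on $B(y,2r_0)$ and in these coordinates $d_g(x,y)^2=|x|^2$. Substituting the ansatz
\[
  H(t,x,y)=(4\pi t)^{-n/2} e^{-d_g(x,y)^2/(4t)}\sum_{k=0}^\infty t^k u_k(x,y)
\]
into $(\partial_t-\Delta_{g,x})H=0$ and collecting powers of $t$ produces a sequence of first-order ODEs along the radial geodesic from $y$. The $k=0$ equation, with normalization $u_0(y,y)=1$, has the explicit solution $u_0(x,y)=J(x,y)^{-1/2}$, where $J(x,y)$ is the Jacobian of the exponential map at $y$ (the Van Vleck--Morette determinant). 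The higher coefficients $u_k$ are then obtained inductively as explicit line integrals of $\Delta_g u_{k-1}$ along the radial geodesic, and are smooth on $\{d_g(x,y)<2r_0\}$.

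Next I would form a cutoff, truncated parametrix
\[
  H_N(t,x,y)=\chi\bigl(d_g(x,y)/r_0\bigr)\,(4\pi t)^{-n/2} e^{-d_g(x,y)^2/(4t)}\sum_{k=0}^{N-1}t^k u_k(x,y),
\]
with $\chi\in C^\infty_c([0,2))$ equal to $1$ on $[0,1]$. By construction $(\partial_t-\Delta_g)H_N$ has the form $t^{N-1-n/2}e^{-d_g(x,y)^2/(4t)}v_N(t,x,y)$ with $v_N$ bounded uniformly on $[0,t_0]\times M\times M$, plus an exponentially small error coming from the cutoff. Duhamel's formula then yields
\[
  p_t(x,y)-H_N(t,x,y)=\int_0^t\int_M p_{t-\tau}(x,z)(\partial_\tau-\Delta_g)H_N(\tau,z,y)\,dV(z)\,d\tau,
\]
and the desired pointwise bound $|R_N|\lesssim t^{N-n/2}$ on $\{d_g(x,y)<r_0,\ 0<t\le t_0\}$ follows by combining this identity with \eqref{eq:heat-off-diagonal}. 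The Gaussian upper bound itself I would establish separately, either by appealing to Li--Yau or Davies-type heat kernel estimates on compact manifolds, or elementarily by combining the on-diagonal bound $p_t(x,x)\lesssim t^{-n/2}$ (a consequence of Weyl's law or Nash's inequality) with a finite-speed-of-propagation argument transferred from the wave equation via the subordination formula.

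The main obstacle is the bookkeeping in the Duhamel step: one must absorb the Gaussian weight produced by $(\partial_\tau-\Delta_g)H_N$ against the Gaussian decay of $p_{t-\tau}$, and verify that after the $\tau$- and $z$-integrations the remainder accumulates no worse than $t^{N-n/2}$. This is classical and dimensionally consistent, but it is the only part of the argument that is not either purely algebraic or a soft compactness statement.
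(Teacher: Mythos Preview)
Your proposal is correct and is precisely the classical Minakshisundaram--Pleijel/Duhamel argument. The paper does not actually prove this lemma: it simply records it as classical and cites \cite[Ch.~2]{BGV92} and \cite[Ch.~8]{Grigoryan09}, where exactly the construction you outline (transport equations for the $u_k$ along radial geodesics, truncated parametrix, Duhamel iteration, and a separately established Gaussian upper bound) is carried out.
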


Since $u_0$ is smooth and $u_0(x,x)=1$, there exists $C>0$ such that
\begin{equation}\label{eq:u0-Lipschitz}
  |u_0(x,y)-1|\le C\,d_g(x,y)
\end{equation}
whenever $d_g(x,y)<r_0$.

\subsection{A smoothed Kuznecov sum}

Define the heat-regularized Kuznecov sum associated with $\mu$ by
\begin{equation}\label{eq:heat-Kuz}
  H_\mu(t)
  := \sum_{j=0}^\infty e^{-t\lambda_j^2}
       \Bigl|\int_M e_j\,d\mu\Bigr|^2,
  \qquad t>0.
\end{equation}
We first relate $H_\mu(t)$ to the heat kernel.

\begin{lemma}\label{lem:heat-spectral}
For every $t>0$,
\begin{equation}\label{eq:Hmu-heat-kernel}
  H_\mu(t)
  = \iint_{M\times M} p_t(x,y)\,d\mu(x)\,d\mu(y).
\end{equation}
\end{lemma}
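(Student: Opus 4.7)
The plan is to use the spectral expansion of the heat kernel:
\[
  p_t(x,y) = \sum_{j=0}^\infty e^{-t\lambda_j^2}\,e_j(x)\,e_j(y),
\]
integrate both sides against $d\mu(x)\,d\mu(y)$ over $M\times M$, and interchange the infinite sum with the double integral. Since the eigenfunctions are real and $\{e_j\}$ is orthonormal, this interchange immediately produces
\[
  \iint_{M\times M} p_t(x,y)\,d\mu(x)\,d\mu(y)
  = \sum_{j=0}^\infty e^{-t\lambda_j^2}\Bigl(\int_M e_j\,d\mu\Bigr)\Bigl(\int_M e_j\,d\mu\Bigr)
  = H_\mu(t).
\]

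The only nontrivial step is justifying the interchange. The cleanest route is to show that the spectral series for $p_t$ converges absolutely and uniformly on $M\times M$ for each fixed $t>0$. For this I would invoke the standard pointwise bound $\|e_j\|_{L^\infty(M)}\lesssim (1+\lambda_j)^{(n-1)/2}$ (a consequence of Hörmander's $L^\infty$ bound or, if preferred, of the local Weyl law) together with Weyl's law, which gives a polynomial upper bound on the number of eigenvalues $\lambda_j\le\Lambda$. Combined with the super-exponential decay of $e^{-t\lambda_j^2}$, these yield
\[
  \sum_{j=0}^\infty e^{-t\lambda_j^2}\,\|e_j\|_\infty^2<\infty,
\]
so the partial sums $\sum_{j\le J} e^{-t\lambda_j^2}e_j(x)e_j(y)$ converge uniformly on $M\times M$ to a continuous limit, which must agree with $p_t(x,y)$ by uniqueness of the fundamental solution (or by identifying both sides as the kernel of $e^{t\Delta_g}$ on $L^2(M)$).

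Once uniform convergence is in hand, the swap is immediate: since $\mu$ is a finite measure and the partial sums are uniformly bounded by an integrable function in $(x,y)$, dominated convergence allows one to integrate term by term, and the factorization of the $j$-th term as the square of $\int_M e_j\,d\mu$ follows from Fubini (applied to the finite product measure $\mu\times\mu$).

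I do not anticipate a genuine obstacle here; the only mild subtlety is that $\mu$ may charge sets of arbitrarily small $L^2$-capacity, so one should not try to make sense of $\int e_j\,d\mu$ through an $L^2$ pairing, but rather through the literal Borel integral of the bounded continuous function $e_j$ against the finite measure $\mu$. With uniform convergence of the spectral expansion this interpretation is unproblematic.
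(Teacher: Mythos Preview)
Your proposal is correct and follows essentially the same route as the paper: both invoke the spectral expansion of $p_t$, justify absolute and uniform convergence on $M\times M$ via H\"ormander's $L^\infty$ bound $\|e_j\|_\infty\lesssim\lambda_j^{(n-1)/2}$ together with the Weyl law, and then use the finiteness of $\mu$ to pass the double integral through the limit of partial sums. The paper organizes the argument slightly more explicitly around the partial sums $p_t^{(J)}$ and $S_J(t)$, but the content is identical.
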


\begin{proof}
Let
\[
  \alpha_j := \int_M e_j\,d\mu,
  \qquad j=0,1,2,\dots.
\]
Recall that the heat kernel satisfies
\[
  p_t(x,y)
  = \sum_{j=0}^\infty e^{-t\lambda_j^2}e_j(x)e_j(y).
\]
For each $J\in\mathbb N$ set
\[
  p_t^{(J)}(x,y)
  := \sum_{j=0}^J e^{-t\lambda_j^2}e_j(x)e_j(y),
\qquad
  S_J(t) := \sum_{j=0}^J e^{-t\lambda_j^2}|\alpha_j|^2.
\]
Since $e^{-t\lambda_j^2}\ge0$ and $|\alpha_j|^2\ge0$, the sequence $S_J(t)$ is
nondecreasing in $J$.

For each fixed $t>0$, the series converges absolutely and uniformly on $M\times M$, since $\sup_{x,y}|e_j(x)e_j(y)|\le \|e_j\|_\infty^2\lesssim \lambda_j^{n-1}$ by H\"ormander \cite{Hormander1968SpectralFunction} and $\sum_{j\ge0} e^{-t\lambda_j^2}\lambda_j^{n-1}<\infty$ by Weyl law.
In particular,
\[
  \sup_{x,y\in M}\bigl|p_t(x,y)-p_t^{(J)}(x,y)\bigr|\longrightarrow 0
  \qquad(J\to\infty),
\]
and $p_t$ is continuous and bounded on $M\times M$.

For each $J$ we have
\[
\begin{split}
  \iint_{M\times M} p_t^{(J)}(x,y)\,d\mu(x)\,d\mu(y)
  &= \iint_{M\times M}
     \sum_{j=0}^J e^{-t\lambda_j^2}e_j(x)e_j(y)\,d\mu(x)\,d\mu(y) \\
  &= \sum_{j=0}^J e^{-t\lambda_j^2}
      \biggl(\int_M e_j(x)\,d\mu(x)\biggr)
      \biggl(\int_M e_j(y)\,d\mu(y)\biggr) \\
  &= S_J(t),
\end{split}
\]
where we have interchanged the finite sum and the integrals.

Since $p_t^{(J)}\to p_t$ uniformly and $\mu$ is finite, we have
\[
  \lim_{J\to\infty}
  \iint_{M\times M} p_t^{(J)}(x,y)\,d\mu(x)\,d\mu(y)
  = \iint_{M\times M} p_t(x,y)\,d\mu(x)\,d\mu(y).
\]
Therefore $S_J(t)$ converges as $J\to\infty$, and
\[
  \sum_{j=0}^\infty e^{-t\lambda_j^2}|\alpha_j|^2
  = \lim_{J\to\infty} S_J(t)
  = \iint_{M\times M} p_t(x,y)\,d\mu(x)\,d\mu(y),
\]
which is precisely \eqref{eq:Hmu-heat-kernel}.
\end{proof}

\subsection{A Gaussian average}

Using the distance distribution $F(r)$ introduced above, we consider the
Gaussian average
\[
  G(t)
  := \iint e^{-d_g(x,y)^2/(4t)}\,d\mu(x)\,d\mu(y).
\]
Let $R(x,y):=d_g(x,y)$ and let $\nu:=R_*(\mu\times\mu)$ be the pushforward
of $\mu\times\mu$ under $R$.  Then $\nu$ is a finite Borel measure on
$[0,\diam(M)]$ and
\[
  F(r) = (\mu\times\mu)\bigl(\{(x,y): d_g(x,y)\le r\}\bigr)
       = \nu([0,r]),
\]
so $F$ is the distribution function of $\nu$.  In particular, for the
nonnegative Borel function $\varphi(r):=e^{-r^2/(4t)}$ we have
\[
  G(t)
  = \int_{M\times M} \varphi(R(x,y))\,d\mu(x)\,d\mu(y)
  = \int_{[0,\diam(M)]} \varphi(r)\,d\nu(r)
  = \int_0^{\diam(M)} e^{-r^2/(4t)}\,dF(r).
\]

The next lemma identifies the behavior of $G(t)$ as $t\to0^+$.

\begin{lemma}\label{lem:Gaussian-F}
Assume that $\mu$ satisfies \eqref{eq:DD}.
Then, as $t\to0^+$,
\begin{equation}\label{eq:G-asympt}
  G(t)
  = \gamma_s\,A_\mu\,t^{s/2} + o(t^{s/2}),
\end{equation}
where
\begin{equation}\label{eq:gamma-s}
  \gamma_s := s\,2^{\,s-1}\Gamma\Bigl(\frac{s}{2}\Bigr)\,\vol(B^s).
\end{equation}
In particular, there exists $C>0$ and $t_1>0$ such that
\begin{equation}\label{eq:G-upper}
  G(t)\le C\,t^{s/2}\qquad(0<t\le t_1).
\end{equation}
\end{lemma}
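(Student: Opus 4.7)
The strategy is to convert $G(t)$ to a Lebesgue integral against the distribution function $F$ and then feed in the averaged $s$-density asymptotic of Definition~\ref{def:DD}. I would begin with the elementary identity
\[
  e^{-r^2/(4t)}=\int_r^\infty \frac{\rho}{2t}\,e^{-\rho^2/(4t)}\,d\rho,
  \qquad r\ge 0,
\]
and apply Fubini--Tonelli to the nonnegative Stieltjes measure $dF$ on $[0,\diam(M)]$. Extending $F$ to $[0,\infty)$ by its terminal value $\mu(M)^2$ (the resulting tail is exponentially small in $t^{-1}$), this rewrites $G(t)$ as
\[
  G(t)=\int_0^\infty \frac{\rho}{2t}\,e^{-\rho^2/(4t)}\,F(\rho)\,d\rho,
\]
with $F(0)=0$ since $s$-Ahlfors regularity forces $\mu$ to be atomless.

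Next, I would decompose
\[
  F(\rho)=\vol(B^s)\,A_\mu\,\rho^s+e(\rho),
\]
where \eqref{eq:DD} yields $e(\rho)=o(\rho^s)$ as $\rho\to 0^+$, while \eqref{eq:Ahlfors} together with boundedness of $F$ gives the uniform estimate $|e(\rho)|\lesssim \rho^s$ on $(0,\infty)$. Substituting the leading part and rescaling $\rho=2\sqrt{t}\,u$, the main contribution reduces to the Gaussian moment
\[
  \vol(B^s)\,A_\mu\,t^{s/2}\cdot 2^{s+1}\int_0^\infty u^{s+1}e^{-u^2}\,du,
\]
which, after evaluating the integral as $\tfrac{1}{2}\Gamma\bigl(\tfrac{s}{2}+1\bigr)=\tfrac{s}{4}\Gamma(s/2)$ and collecting constants, matches exactly $\gamma_s\,A_\mu\,t^{s/2}$ with $\gamma_s$ as in \eqref{eq:gamma-s}.

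It remains to control the remainder $\int_0^\infty \frac{\rho}{2t}e^{-\rho^2/(4t)}e(\rho)\,d\rho$. For each $\eta>0$, I would choose $\delta>0$ so that $|e(\rho)|\le \eta\rho^s$ on $(0,\delta]$; the inner piece then contributes at most $\eta$ times a fixed multiple of $t^{s/2}$ by the same rescaling. On $\rho>\delta$, the split $e^{-\rho^2/(4t)}\le e^{-\delta^2/(8t)}e^{-\rho^2/(8t)}$ shows that this piece decays faster than any power of $t$. Letting $t\to 0$ followed by $\eta\to 0$ yields the $o(t^{s/2})$ remainder and hence the asymptotic \eqref{eq:G-asympt}. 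The upper bound \eqref{eq:G-upper} is then immediate: inserting the uniform bound $F(\rho)\lesssim \rho^s$ into the Lebesgue representation and running the same scaling gives $G(t)\lesssim t^{s/2}$. No step presents a substantive obstacle; the only care needed is choosing the cutoff $\delta$ independently of $t$ so that both near and far portions of the remainder are genuinely $o(t^{s/2})$.
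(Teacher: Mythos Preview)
Your proposal is correct and follows essentially the same route as the paper: decompose $F(r)=\vol(B^s)A_\mu r^s+\text{error}$, evaluate the Gaussian moment for the leading part, and handle the remainder by splitting at a small $\delta$ to exploit $o(r^s)$ near the origin and exponential decay away from it. The only cosmetic difference is ordering: you convert the Stieltjes integral to a Lebesgue integral against $F$ at the outset via the layer-cake identity, whereas the paper first decomposes as a Stieltjes integral and then integrates by parts in the error term, arriving at the same integrals.
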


\begin{proof}
Write, by our assumption on $\mu$,
\[
  F(r) = \vol(B^s)A_\mu r^s + \mathcal E(r),
\]
with $\mathcal E(r)=o(r^s)$ as $r\to0^+$.  Fix $r_0>0$ to be a small constant.  Split
\[
  G(t)
  = \int_0^{r_0} e^{-r^2/(4t)}\,dF(r)
    + \int_{r_0}^{\diam(M)} e^{-r^2/(4t)}\,dF(r)
  =: G_0(t)+G_1(t).
\]
Since $F$ is bounded on $[r_0,\diam(M)]$, we have
\[
  |G_1(t)| \le F(\diam(M))\,e^{-r_0^2/(4t)} = o(t^N)
\]
for every $N>0$ as $t\to0^+$.  In particular $G_1(t)=o(t^{s/2})$.

For $G_0(t)$, decompose
\[
  G_0(t)
  = \vol(B^s)A_\mu\int_0^{r_0} e^{-r^2/(4t)}\,d(r^s)
    + \int_0^{r_0} e^{-r^2/(4t)}\,d\mathcal E(r)
  =: J_1(t)+J_2(t).
\]

For $J_1(t)$ we compute explicitly:
\[
  J_1(t)
  = \vol(B^s)A_\mu s\int_0^{r_0} r^{s-1}e^{-r^2/(4t)}\,dr
  = \vol(B^s)A_\mu s\int_0^\infty r^{s-1}e^{-r^2/(4t)}\,dr + O(e^{-r_0^2/(4t)}).
\]
The error term is $o(t^{s/2})$ as above.  Using the standard integral
\[
  \int_0^\infty r^{s-1}e^{-r^2/(4t)}\,dr
  = 2^{\,s-1}\Gamma\Bigl(\frac{s}{2}\Bigr)t^{s/2},
\]
we obtain
\[
  J_1(t)
  = \gamma_s\,A_\mu\,t^{s/2} + o(t^{s/2})
\]
with $\gamma_s$ as in \eqref{eq:gamma-s}.

For $J_2(t)$ we integrate by parts on $[0,r_0]$:
\[
  J_2(t)
  = e^{-r^2/(4t)}\mathcal E(r)\Big|_{0}^{r_0}
    + \int_0^{r_0}\frac{r}{2t}e^{-r^2/(4t)}\mathcal E(r)\,dr
  =: B(t)+I(t).
\]
Since $F(0)=0$ and hence $\mathcal E(0)=0$, we have
$B(t)=e^{-r_0^2/(4t)}\mathcal E(r_0)=o(t^{s/2})$.

To handle $I(t)$, fix $\eps>0$ and choose $\delta\in(0,r_0)$ so small that
$|\mathcal E(r)|\le\eps r^s$ for $0<r\le\delta$.  Split
\[
  I(t)
  = \int_0^{\delta}\frac{r}{2t}e^{-r^2/(4t)}\mathcal E(r)\,dr
    + \int_{\delta}^{r_0}\frac{r}{2t}e^{-r^2/(4t)}\mathcal E(r)\,dr
  =: I_1(t)+I_2(t).
\]

On $(0,\delta]$ we have $|\mathcal E(r)|\le\eps r^s$, so
\[
  |I_1(t)|
  \le \frac{\eps}{2t}\int_0^{\delta} r^{s+1}e^{-r^2/(4t)}\,dr
  \le \frac{\eps}{2t}\int_0^{\infty} r^{s+1}e^{-r^2/(4t)}\,dr
  = C_1\,\eps\,t^{s/2}
\]
for some $C_1$ depending only on $s$.

On $[\delta,r_0]$ we use the upper bound
$F(r)\le C r^s$ to deduce that $|\mathcal E(r)|\le C' r^s$ on this interval.
Thus
\[
  |I_2(t)|
  \le \frac{C'}{2t}\int_{\delta}^{r_0} r^{s+1}e^{-r^2/(4t)}\,dr
  \le C'' t^{-1}e^{-\delta^2/(4t)},
\]
which is $o(t^{s/2})$ as $t\to0^+$.  Hence $I(t)=o(t^{s/2})$, and since
$\eps>0$ was arbitrary, we also have $J_2(t)=B(t)+I(t)=o(t^{s/2})$.

Combining the estimates for $J_1(t)$ and $J_2(t)$ gives
\eqref{eq:G-asympt}.  The upper bound \eqref{eq:G-upper} follows from
\eqref{eq:G-asympt} by taking $t$ sufficiently small and then enlarging the
constant $C$ to cover all $t\in(0,t_1]$.
\end{proof}

\subsection{Small-time asymptotics of the smoothed Kuznecov sum}

We can now identify the leading behavior of $H_\mu(t)$ as $t\to0^+$.

\begin{lemma}\label{lem:Hmu-asympt}
Assume that $\mu$ satisfies \eqref{eq:DD}.  Then
\begin{equation}\label{eq:Hmu-asympt}
  H_\mu(t)
  = \gamma_{n,s}\,A_\mu\,t^{-(n-s)/2}
    + o\bigl(t^{-(n-s)/2}\bigr)
  \qquad (t\to0^+),
\end{equation}
where
\begin{equation}\label{eq:gamma-ns}
  \gamma_{n,s}
  := (4\pi)^{-n/2}\,\gamma_s
  = (4\pi)^{-n/2}\,
    s\,2^{\,s-1}\Gamma\Bigl(\frac{s}{2}\Bigr)\,\vol(B^s).
\end{equation}
\end{lemma}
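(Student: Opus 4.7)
The plan is to start from Lemma~\ref{lem:heat-spectral}, which writes $H_\mu(t)$ as a double integral of the heat kernel against $\mu\times\mu$, and then insert the on-diagonal expansion of Lemma~\ref{lem:heat-local-expansion}. The leading Gaussian factor reduces $H_\mu(t)$ to the Gaussian average $G(t)$ controlled by Lemma~\ref{lem:Gaussian-F}, which already carries the correct constant $\gamma_{n,s}A_\mu$. All other contributions should gain an extra power of $t^{1/2}$ (or $t$) and therefore be absorbed into $o(t^{-(n-s)/2})$.

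Concretely, I would first split the double integral along $\{d_g(x,y)<r_0\}$ and its complement. The far-diagonal piece is controlled by the global Gaussian bound \eqref{eq:heat-off-diagonal} and gives a contribution bounded by $C t^{-n/2}e^{-r_0^2/(ct)}\mu(M)^2$, which is $o(t^K)$ for every $K$. On the near-diagonal region I apply \eqref{eq:heat-local-expansion} with a fixed $N>s/2$, producing four pieces to estimate: (i) the principal term with $u_0$ replaced by $1$, (ii) the correction from $u_0(x,y)-1$, (iii) the higher-order terms $t^k u_k(x,y)$ with $1\le k\le N-1$, and (iv) the smooth remainder $R_N$.

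Piece (i) yields $(4\pi t)^{-n/2}$ times the restriction of $G(t)$ to $\{d_g<r_0\}$; the difference from $G(t)$ itself is again Gaussian-tail negligible, so Lemma~\ref{lem:Gaussian-F} gives the main term $\gamma_{n,s}A_\mu t^{-(n-s)/2}+o(t^{-(n-s)/2})$, with $\gamma_{n,s}=(4\pi)^{-n/2}\gamma_s$. For piece (iii), each $u_k$ is smooth and hence bounded on the closed tube $\{d_g\le r_0\}$, so the $k$-th term is controlled by $Ct^k(4\pi t)^{-n/2}G(t)\lesssim t^{k+(s-n)/2}$, which is $o(t^{-(n-s)/2})$ as $k\ge1$. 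Piece (iv) integrates to at most $C_N t^{N-n/2}\mu(M)^2=o(t^{-(n-s)/2})$ by the choice of $N$.

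The main obstacle is piece (ii), where the Lipschitz estimate \eqref{eq:u0-Lipschitz} only extracts a factor of $d_g(x,y)$. Rewriting in terms of the distance distribution, this piece is bounded by
\[
C(4\pi t)^{-n/2}\int_0^{r_0} r\,e^{-r^2/(4t)}\,dF(r).
\]
Integration by parts in Lebesgue--Stieltjes form, combined with the Ahlfors upper bound $F(r)\lesssim r^s$ from \eqref{eq:Ahlfors}, bounds the right-hand integral by
\[
C'\int_0^{r_0} r^s e^{-r^2/(4t)}\Bigl(1+\tfrac{r^2}{2t}\Bigr)\,dr
\;+\;(\text{exponentially small boundary term})
\;\lesssim\; t^{(s+1)/2},
\]
via the standard Gaussian moment identities. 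The total contribution of piece (ii) is therefore of order $t^{1/2-(n-s)/2}=o(t^{-(n-s)/2})$. Summing the four pieces and combining with the far-diagonal estimate yields \eqref{eq:Hmu-asympt}. The only subtlety is thus piece (ii): without the Ahlfors upper bound to convert the extra factor of $d_g$ into a genuine $\sqrt{t}$ gain, the correction from $u_0-1$ would not be controllable by the averaged density hypothesis alone.
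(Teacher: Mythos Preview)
Your proposal is correct and follows essentially the same route as the paper: the same near/far-diagonal split, the same four pieces (i)--(iv) from the heat-kernel parametrix with $N>s/2$, and the same Lebesgue--Stieltjes integration-by-parts combined with $F(r)\lesssim r^s$ to extract the extra $t^{1/2}$ from the $u_0-1$ correction. The only cosmetic difference is that the paper cites \eqref{eq:DD} rather than the Ahlfors upper bound for $F(r)\lesssim r^s$, but either hypothesis suffices here.
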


\begin{proof}
By Lemma \ref{lem:heat-spectral},
\[
  H_\mu(t)
  = \iint_{M\times M} p_t(x,y)\,d\mu(x)\,d\mu(y).
\]
Fix $r_0>0$ as in Lemma \ref{lem:heat-local-expansion} and split
\[
  H_\mu(t)
  = \iint_{d_g(x,y)\le r_0} p_t(x,y)\,d\mu(x)\,d\mu(y)
    + \iint_{d_g(x,y)>r_0} p_t(x,y)\,d\mu(x)\,d\mu(y)
  =: H_0(t)+H_1(t).
\]

By the Gaussian upper bound \eqref{eq:heat-off-diagonal},
\[
  0\le H_1(t)
  \le C\,t^{-n/2}\exp\Bigl(-\frac{r_0^2}{ct}\Bigr)\mu(M)^2
  = o\bigl(t^{-(n-s)/2}\bigr),
\]
since $t^{(n-s)/2}e^{-r_0^2/(ct)}\to0$ as $t\to0^+$.

For $H_0(t)$ we apply \eqref{eq:heat-local-expansion} with an
integer $N\ge1$ to be chosen momentarily.  For $d_g(x,y)\le r_0$ and
$0<t\le t_0$ we have
\[
  p_t(x,y)
  = (4\pi t)^{-n/2} e^{-d_g(x,y)^2/(4t)}
     \biggl(\sum_{k=0}^{N-1} t^k u_k(x,y)\biggr)
    + R_N(t,x,y),
\]
with $|R_N(t,x,y)|\le C_N t^{N-n/2}$.  Thus
\[
  H_0(t)
  = (4\pi t)^{-n/2}
      \sum_{k=0}^{N-1} t^k
      \iint_{d_g(x,y)\le r_0} e^{-d_g(x,y)^2/(4t)}u_k(x,y)\,d\mu(x)\,d\mu(y)
    + E_N(t),
\]
where
\[
  |E_N(t)|
  \le C_N t^{N-n/2}\mu(M)^2.
\]

We first single out the $k=0$ term.  Noting
\eqref{eq:u0-Lipschitz}, we may set $\omega:=u_0-1$, and write
\[
\begin{split}
  &\iint_{d_g(x,y)\le r_0} e^{-d_g(x,y)^2/(4t)}u_0(x,y)\,d\mu(x)\,d\mu(y) \\
  &= \iint_{d_g(x,y)\le r_0} e^{-d_g(x,y)^2/(4t)}\,d\mu(x)\,d\mu(y)
   + \iint_{d_g(x,y)\le r_0} e^{-d_g(x,y)^2/(4t)}\omega(x,y)\,d\mu(x)\,d\mu(y).
\end{split}
\]
The first integral differs from $G(t)$ by a tail integral:
\[
\begin{split}
  \iint_{d_g(x,y)\le r_0} e^{-d_g(x,y)^2/(4t)}\,d\mu(x)\,d\mu(y)
  &= G(t) - \iint_{d_g(x,y)>r_0} e^{-d_g(x,y)^2/(4t)}\,d\mu(x)\,d\mu(y) \\
  &= G(t) + T(t),
\end{split}
\]
where
\[
  |T(t)|
  \le \mu(M)^2 e^{-r_0^2/(4t)}
  = o(t^{s/2})
\]
as $t\to0^+$.

For the second integral we use \eqref{eq:u0-Lipschitz}:
\[
\begin{split}
  \biggl|\iint_{d_g(x,y)\le r_0}
          e^{-d_g(x,y)^2/(4t)}\omega(x,y)\,d\mu(x)\,d\mu(y)\biggr|
  &\le C_2
      \iint_{d_g(x,y)\le r_0} d_g(x,y)e^{-d_g(x,y)^2/(4t)}\,d\mu(x)\,d\mu(y) \\
  &= C_2\int_0^{r_0} r e^{-r^2/(4t)}\,dF(r) \\
  &=: C_2 J(t).
\end{split}
\]
Similar to the proof of Lemma \ref{lem:Gaussian-F}, noting $F(0)=0$ and \eqref{eq:DD}, integration by parts gives

\[
  |J(t)|
  \le C r_0^{s+1}e^{-r_0^2/(4t)}
  + C\int_0^{r_0} e^{-r^2/(4t)} r^s\,dr
  + \frac{C}{t}\int_0^{r_0} e^{-r^2/(4t)} r^{s+2}\,dr.
\]
Extending the integrals to $\infty$ and changing variables $r=\sqrt{t}\,u$ gives
\[
  \int_0^\infty e^{-r^2/(4t)} r^{a}\,dr
  = t^{(a+1)/2}\int_0^\infty e^{-u^2/4}u^{a}\,du
 \lesssim t^{(a+1)/2},
\]
so the last two terms are $O(t^{(s+1)/2})$ and $t^{-1}O(t^{(s+3)/2})=O(t^{(s+1)/2})$,
while the boundary term is $o(t^{(s+1)/2})$, and thus  $J(t)\lesssim t^{(s+1)/2}$.
Hence
\[
  \iint_{d_g(x,y)\le r_0}
          e^{-d_g(x,y)^2/(4t)}\omega(x,y)\,d\mu(x)\,d\mu(y)
  = O\bigl(t^{(s+1)/2}\bigr).
\]

Putting these estimates together, we obtain
\[
  \iint_{d_g(x,y)\le r_0} e^{-d_g(x,y)^2/(4t)}u_0(x,y)\,d\mu(x)\,d\mu(y)
  = G(t) + O\bigl(t^{(s+1)/2}\bigr).
\]
Therefore
\[
  (4\pi t)^{-n/2}
  \iint_{d_g(x,y)\le r_0} e^{-d_g(x,y)^2/(4t)}u_0(x,y)\,d\mu(x)\,d\mu(y)
  = (4\pi t)^{-n/2}G(t) + E_0(t),
\]
where
\[
  |E_0(t)|
  \lesssim t^{-n/2} t^{(s+1)/2}
  = t^{1/2-(n-s)/2}
  = o\bigl(t^{-(n-s)/2}\bigr)
\]
as $t\to0^+$.

Next, for each $k=1,\dots,N-1$, the smoothness of $u_k$ and the upper bound
\eqref{eq:G-upper} yield
\[
\begin{split}
  &\biggl|
   (4\pi t)^{-n/2}t^k
   \iint_{d_g(x,y)\le r_0} e^{-d_g(x,y)^2/(4t)}u_k(x,y)\,d\mu(x)\,d\mu(y)
   \biggr|\\
  &\qquad\le C_k t^{-n/2+k}
      \iint e^{-d_g(x,y)^2/(4t)}\,d\mu(x)\,d\mu(y)
   = C_k t^{-n/2+k}G(t)
   \lesssim t^{-n/2+k} t^{s/2} \\
  &\qquad= O\bigl(t^{k-(n-s)/2}\bigr)
   = o\bigl(t^{-(n-s)/2}\bigr),
\end{split}
\]
since $k\ge1$.

Finally, for the remainder term we have
\[
  |E_N(t)| \le C_N t^{N-n/2}\mu(M)^2.
\]
Choose an integer $N>\frac{s}{2}$.  Then
\[
  t^{(n-s)/2} |E_N(t)|
  \lesssim t^{N-s/2}\longrightarrow 0
  \qquad(t\to0^+),
\]
so $E_N(t)=o\bigl(t^{-(n-s)/2}\bigr)$.

Collecting the contributions from $k=0,\dots,N-1$, the remainder $E_N$, and
$H_1(t)$, we conclude that
\[
  H_\mu(t)
  = (4\pi t)^{-n/2}G(t) + o\bigl(t^{-(n-s)/2}\bigr)
  \qquad (t\to0^+).
\]
Using Lemma \ref{lem:Gaussian-F}, which gives
$G(t)=\gamma_s A_\mu t^{s/2}+o(t^{s/2})$, we obtain
\[
  H_\mu(t)
  = (4\pi)^{-n/2}\gamma_s A_\mu t^{-(n-s)/2}
     + o\bigl(t^{-(n-s)/2}\bigr),
\]
which is exactly \eqref{eq:Hmu-asympt} with $\gamma_{n,s}$ given by
\eqref{eq:gamma-ns}.
\end{proof}

\section{A Tauberian theorem and the proof of Theorem \texorpdfstring{\ref{thm:main}}{main}}\label{sec:Tauberian}

We now view the smoothed Kuznecov sum $H_\mu(t)$ as a Laplace--Stieltjes
transform of a spectral counting function and apply a Tauberian theorem to
deduce the asymptotic behavior of $N_\mu(\lambda)$.

\subsection{Laplace--Stieltjes representation}

Define a nondecreasing, right-continuous function $\mathcal N:[0,\infty)\to
[0,\infty)$ by
\[
  \mathcal N(S)
  := \sum_{\lambda_j^2\le S}
      \Bigl|\int_M e_j\,d\mu\Bigr|^2,
  \qquad S\ge0.
\]
Thus $\mathcal N$ puts mass $ |\int_M e_j\,d\mu|^2$ at the point $S=\lambda_j^2$.
By definition \eqref{eq:Kuz-sum},
\begin{equation}\label{eq:Sigma-vs-N}
  \mathcal N(\lambda^2) = N_\mu(\lambda)
  \qquad (\lambda\ge0).
\end{equation}

Using Lemma \ref{lem:heat-spectral} and the spectral expansion of $p_t$, we
have
\begin{equation}\label{eq:Hmu-Laplace}
  H_\mu(t)
  = \sum_{j=0}^\infty e^{-t\lambda_j^2}
       \Bigl|\int_M e_j\,d\mu\Bigr|^2
  = \int_{[0,\infty)} e^{-tS}\,d\mathcal N(S),
  \qquad t>0.
\end{equation}
In particular, $H_\mu(t)$ is the Laplace--Stieltjes transform of $\mathcal N$.

\subsection{A Tauberian theorem for Laplace--Stieltjes transforms}

We now recall a standard Tauberian theorem which connects the singular
behavior of a Laplace--Stieltjes transform at the origin to the growth of
the underlying measure.  This is a special case of Karamata's theorem. See,
for example, \cite[Theorem 1.7.1]{BinghamGoldieTeugels}.

\begin{theorem}[Karamata's Tauberian Theorem]\label{thm:Tauberian-heat}
Let $\mathcal N:[0,\infty)\to[0,\infty)$ be nondecreasing and right-continuous,
and let
\[
  \mathcal F(t) := \int_{[0,\infty)} e^{-tS}\,d\mathcal N(S)
\]
be finite for all $t>0$.  Suppose that for some constants $C>0$ and
$\beta>0$,
\[
  \mathcal F(t) \sim C\,t^{-\beta}
  \qquad (t\to0^+).
\]
Then
\[
  \mathcal N(S) \sim \frac{C}{\Gamma(\beta+1)}\,S^\beta
  \qquad (S\to\infty).
\]
\end{theorem}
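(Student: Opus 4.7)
The plan is to prove this by Karamata's classical rescaling and approximation argument: rescale $\mathcal N$, identify the limiting Laplace--Stieltjes transform, and then invoke monotonicity to convert functional convergence into pointwise asymptotics of the distribution function.

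For each $\lambda>0$ introduce the rescaled nondecreasing function $U_\lambda(S):=\lambda^{-\beta}\mathcal N(\lambda S)$. A direct change of variables gives, for each $x>0$,
\[
  \int_{[0,\infty)} e^{-xS}\,dU_\lambda(S)
  = \lambda^{-\beta}\,\mathcal F(x/\lambda).
\]
By hypothesis the right-hand side tends to $C\,x^{-\beta}$ as $\lambda\to\infty$. Setting $U_*(S):=\frac{C}{\Gamma(\beta+1)}\,S^\beta$, the standard Gamma integral $\int_0^\infty e^{-xS}S^{\beta-1}\,dS=\Gamma(\beta)/x^\beta$ yields $\int e^{-xS}\,dU_*(S)=C\,x^{-\beta}$, so the Laplace--Stieltjes transforms of $U_\lambda$ converge pointwise to that of $U_*$. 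By linearity this extends to
\[
  \int P(e^{-S})\,dU_\lambda(S)\longrightarrow\int P(e^{-S})\,dU_*(S)
\]
for every polynomial $P$ with $P(0)=0$, since such integrals are finite linear combinations of the Laplace transforms at integer points $x=1,2,\dots$.

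To extract $U_\lambda(1)=\lambda^{-\beta}\mathcal N(\lambda)$, I would apply Karamata's approximation trick. Under the substitution $x=e^{-S}\in(0,1]$, the indicator $\mathbf{1}_{[0,1]}(S)$ corresponds to $\mathbf{1}_{[e^{-1},1]}(x)$. For every $\varepsilon>0$ one produces polynomials $P_-\le\mathbf{1}_{[e^{-1},1]}\le P_+$ on $[0,1]$ satisfying $P_\pm(0)=0$ and $\int(P_+-P_-)(e^{-S})\,dU_*(S)<\varepsilon$. This is done by first smoothing the discontinuity at $x=e^{-1}$ to obtain continuous envelopes vanishing at $x=0$ and pinching the indicator with controlled $U_*$-integral error, and then invoking the Weierstrass approximation theorem, up to a small correction term to ensure $P_\pm(0)=0$ exactly and to preserve the one-sided bounds (for which we enlarge the gap between the envelopes by a margin that absorbs the uniform approximation error).

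The monotonicity of $\mathcal N$, equivalently the nonnegativity of $dU_\lambda$, now closes the sandwich:
\[
  \int P_-(e^{-S})\,dU_\lambda(S)
  \le U_\lambda(1)
  \le \int P_+(e^{-S})\,dU_\lambda(S).
\]
Passing to $\lambda\to\infty$ and using the convergence established above squeezes both $\liminf$ and $\limsup$ of $U_\lambda(1)$ into an $\varepsilon$-neighbourhood of $U_*(1)=C/\Gamma(\beta+1)$. Since $\varepsilon>0$ is arbitrary, $U_\lambda(1)\to C/\Gamma(\beta+1)$, which upon rewriting in terms of $\mathcal N$ is exactly $\mathcal N(S)\sim \frac{C}{\Gamma(\beta+1)}S^\beta$. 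The main obstacle is the approximation step: pointwise convergence of Laplace transforms does not by itself pin down the distribution function, and the Tauberian hypothesis of monotonicity of $\mathcal N$ is precisely what allows the polynomial sandwich to convert functional convergence into genuine pointwise asymptotics of the jump function $\mathcal N$.
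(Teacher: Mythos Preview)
Your sketch is correct: this is precisely Karamata's classical rescaling and polynomial-approximation argument. The paper does not supply its own proof of this theorem; it simply invokes it as a known result and cites \cite[Theorem~1.7.1]{BinghamGoldieTeugels}, whose proof is essentially the one you outline.
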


Theorem \ref{thm:Tauberian-heat} is the Tauberian half of the usual
``Abelian $\Longleftrightarrow$ Tauberian'' correspondence for
Laplace--Stieltjes transforms with index $\beta>0$: roughly speaking,
asymptotics of $\mathcal N(S)$ as $S\to\infty$ and asymptotics of its
Laplace--Stieltjes transform $\mathcal F(t)$ as $t\to0^+$ determine each other.
A proof in this general framework can be found in
\cite[§1.7]{BinghamGoldieTeugels}.

\subsection{Proof of Theorem \ref{thm:main}}

We now combine Lemma \ref{lem:Hmu-asympt} with Theorem
\ref{thm:Tauberian-heat}.  Recall that Lemma \ref{lem:Hmu-asympt} shows that
\[
  H_\mu(t)
  = \gamma_{n,s}\,A_\mu\,t^{-(n-s)/2}
    + o\bigl(t^{-(n-s)/2}\bigr)
  \qquad (t\to0^+),
\]
with $\gamma_{n,s}$ given by \eqref{eq:gamma-ns}.  In the notation of
Theorem \ref{thm:Tauberian-heat} we have
\[
  \mathcal F(t) = H_\mu(t),\quad
  C = \gamma_{n,s}A_\mu,\quad
  \beta = \frac{n-s}{2}.
\]
Therefore
\[
  \mathcal N(S)
  \sim \frac{\gamma_{n,s}A_\mu}{\Gamma\bigl(\frac{n-s}{2}+1\bigr)}\,
        S^{(n-s)/2}
  \qquad (S\to\infty).
\]

Using \eqref{eq:Sigma-vs-N} with $S=\lambda^2$, we obtain
\[
  N_\mu(\lambda)
  = \mathcal N(\lambda^2)
  \sim \frac{\gamma_{n,s}A_\mu}{\Gamma\bigl(\tfrac{n-s}{2}+1\bigr)}\,
        \lambda^{n-s}
  \qquad (\lambda\to\infty).
\]
In particular, \eqref{eq:Kuz-main} holds with
\[
  C_{n,s}
  = \frac{\gamma_{n,s}}{\Gamma\bigl(\tfrac{n-s}{2}+1\bigr)}.
\]

It remains to simplify this constant.  Recall from \eqref{eq:gamma-ns} and
\eqref{eq:gamma-s} that
\[
  \gamma_{n,s}
  = (4\pi)^{-n/2}\,
    s\,2^{\,s-1}\Gamma\Bigl(\frac{s}{2}\Bigr)\,\vol(B^s),
\]
and that
\[
  \vol(B^m)
  = \frac{\pi^{m/2}}{\Gamma\bigl(\frac{m}{2}+1\bigr)},
  \qquad m>0.
\]
Since $\Gamma\bigl(\frac{s}{2}+1\bigr)=\frac{s}{2}\Gamma\bigl(\frac{s}{2}\bigr)$,
we may write
\[
  \vol(B^s)
  = \frac{\pi^{s/2}}{\Gamma\bigl(\frac{s}{2}+1\bigr)}
  = \frac{2}{s}\,\pi^{s/2}\,\Gamma\Bigl(\frac{s}{2}\Bigr)^{-1}.
\]
Substituting this into the formula for $\gamma_{n,s}$ gives
\[
  \gamma_{n,s}
  = (4\pi)^{-n/2}\,s\,2^{\,s-1}\Gamma\Bigl(\frac{s}{2}\Bigr)
     \cdot \frac{2}{s}\,\pi^{s/2}\,\Gamma\Bigl(\frac{s}{2}\Bigr)^{-1}
  = (4\pi)^{-n/2}\,2^{\,s}\,\pi^{s/2}.
\]
Hence
\[
  C_{n,s}
  = \frac{(4\pi)^{-n/2}2^{\,s}\pi^{s/2}}
         {\Gamma\bigl(\frac{n-s}{2}+1\bigr)}
  = 2^{-(n-s)}\pi^{-(n-s)/2}
      \frac{1}{\Gamma\bigl(\frac{n-s}{2}+1\bigr)}.
\]
On the other hand, taking $m=n-s$ in the volume formula,
\[
  \vol(B^{n-s})
  = \frac{\pi^{(n-s)/2}}{\Gamma\bigl(\frac{n-s}{2}+1\bigr)},
\]
so
\[
  (2\pi)^{-(n-s)}\,\vol(B^{n-s})
  = 2^{-(n-s)}\pi^{-(n-s)}\cdot
    \frac{\pi^{(n-s)/2}}{\Gamma\bigl(\frac{n-s}{2}+1\bigr)}
  = C_{n,s}.
\]
Thus
\[
  C_{n,s}
  = (2\pi)^{-(n-s)}\,\vol(B^{\,n-s}),
\]
which is exactly the constant stated in Theorem \ref{thm:main}.  This
completes the proof.

\section{Formulae of Hare--Roginskaya and the proof of Theorem \ref{thm:HR-bound}}\label{sec:HR}

In this section we explain how the energy identities of Hare and Roginskaya
\cite{HareRoginskaya03} imply the polynomial upper bounds
stated in Theorem \ref{thm:HR-bound}. 

Recall that for $0<u<n$ we defined the Riesz energy of a finite Borel measure
$\mu$ on $M$ by
\[
  I_u(\mu) = \iint d_g(x,y)^{-u}\,d\mu(x)\,d\mu(y),
\]
where $d_g(x,y)$ denotes the Riemannian distance on $(M,g)$, see
\eqref{eq:Riesz-energy}.  Hare and Roginskaya relate $I_u(\mu)$ to a
weighted spectral sum via the kernel of a suitable elliptic operator.  For
our purposes, we only need the following consequence of their results.

\begin{proposition}[\cite{HareRoginskaya03}]\label{prop:HR-energy}
Let $(M,g)$ be a compact Riemannian manifold of dimension $n\ge2$.  For each
$u\in(0,n)$ there exists a constant $c_{n,u}\ge1$, depending only on $n$, $u$
and $(M,g)$, such that for every finite Borel measure $\mu$ on $M$,
\begin{equation}\label{eq:HR-energy}
  c_{n,u}^{-1} I_u(\mu)
  \le \sum_{j=0}^\infty (1+\lambda_j^2)^{-(n-u)/2}
        \Bigl|\int_M e_j\,d\mu\Bigr|^2
  \le c_{n,u}\, I_u(\mu).
\end{equation}
\end{proposition}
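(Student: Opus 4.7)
The plan is to define the kernel
\[
  K_u(x,y):=\frac{1}{\Gamma((n-u)/2)}\int_0^\infty t^{(n-u)/2-1}e^{-t}\,p_t(x,y)\,dt,
\]
which, by subordination, is the integral kernel of the Bessel potential $(1-\Delta_g)^{-(n-u)/2}$ on $(M,g)$. The strategy is to prove two things: (i) integrating $K_u$ against $\mu\times\mu$ reproduces the spectral sum in the middle of \eqref{eq:HR-energy}, and (ii) $K_u(x,y)\approx d_g(x,y)^{-u}$ on $M\times M$ off the diagonal. Granting these, integrating the two-sided bound in (ii) against $\mu\times\mu$ and invoking (i) immediately yields both inequalities in \eqref{eq:HR-energy}.

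For (i), since $p_t\ge 0$ one has Fubini--Tonelli at one's disposal throughout. Using the absolutely and uniformly convergent spectral expansion $p_t(x,y)=\sum_j e^{-t\lambda_j^2}e_j(x)e_j(y)$ from the proof of Lemma~\ref{lem:heat-spectral}, and writing $\alpha_j:=\int_M e_j\,d\mu$, one obtains
\[
  \iint K_u(x,y)\,d\mu(x)\,d\mu(y)
  =\frac{1}{\Gamma((n-u)/2)}\int_0^\infty t^{(n-u)/2-1}e^{-t}\sum_j e^{-t\lambda_j^2}|\alpha_j|^2\,dt,
\]
and the elementary Gamma identity $\int_0^\infty t^{(n-u)/2-1}e^{-(1+\lambda_j^2)t}\,dt=\Gamma((n-u)/2)(1+\lambda_j^2)^{-(n-u)/2}$ produces the desired spectral representation after one further exchange of sum and integral, again justified by positivity.

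For (ii), I would combine the near-diagonal heat expansion of Lemma~\ref{lem:heat-local-expansion} with the global Gaussian bound \eqref{eq:heat-off-diagonal}. Substituting $t=d_g(x,y)^2\tau$ in the subordination integral converts the $t$-integrand, up to smooth factors, into a fixed profile in $\tau$; the overall prefactor is $d_g(x,y)^{-u}$ and the remaining $\tau$-integral is bounded above and below by positive constants whenever $d_g(x,y)$ is small. Off a neighborhood of the diagonal, $K_u$ is smooth and strictly positive on the compact manifold $M\times M$, and $d_g(x,y)^{-u}$ is likewise bounded above and below by positive constants, so the pointwise comparability extends globally.

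The main obstacle will be uniform control of the constants as $u$ approaches the endpoints of $(0,n)$. The factor $\Gamma((n-u)/2)^{-1}$ and the integrability at $t=0$ of $t^{(n-u)/2-1}$ both degenerate there, so $c_{n,u}$ necessarily blows up as $u\to 0^+$ or $u\to n^-$; this is precisely the obstruction noted in Section~\ref{sec:HR} to a direct passage $u\to s^-$ in \eqref{eq:HR-energy}. For each fixed $u\in(0,n)$, however, all the integrals converge absolutely and the constants depend only on $n$, $u$, and the heat-kernel constants of $(M,g)$ from Lemma~\ref{lem:heat-local-expansion}.
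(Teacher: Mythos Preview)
The paper does not prove Proposition~\ref{prop:HR-energy}; it is quoted from \cite{HareRoginskaya03}, and the only indication of method is the subsequent Remark, which says the original proof realizes $(I-\Delta_g)^{-(n-u)/2}$ as a pseudodifferential operator and identifies its kernel with a Riesz-type potential. Your heat-kernel subordination route is a genuinely different (and, in the context of this paper, more self-contained) way to the same endpoint: rather than invoking the pseudodifferential calculus, you build the Bessel kernel directly from $p_t$ via the Gamma integral and read off both the spectral identity and the pointwise comparability $K_u\approx d_g^{-u}$ from the short-time heat asymptotics already recorded in Lemma~\ref{lem:heat-local-expansion}. This meshes nicely with Sections~\ref{sec:heat}--\ref{sec:Tauberian} and avoids importing any new machinery.

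Two places deserve a little more care. First, the paper only states the Gaussian \emph{upper} bound \eqref{eq:heat-off-diagonal}; for the lower inequality $K_u(x,y)\gtrsim d_g(x,y)^{-u}$ near the diagonal you also need a matching short-time Gaussian \emph{lower} bound $p_t(x,y)\ge c\,t^{-n/2}e^{-d_g(x,y)^2/(c't)}$ on $(0,t_0]$, which is standard on compact manifolds but should be cited or extracted from the $u_0(x,x)=1$ expansion. Second, \eqref{eq:heat-off-diagonal} is only asserted for $t\in(0,1]$; for $t\ge1$ you should simply use that $p_t$ is uniformly bounded on $M\times M$ and let the factor $e^{-t}$ in the subordination integral absorb the tail. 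With those two points made explicit, your argument is complete, and your closing observation about the blow-up of $c_{n,u}$ as $u\to n^-$ is exactly the obstruction the paper flags after Theorem~\ref{thm:HR-bound}.
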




Given Proposition \ref{prop:HR-energy}, the proof of
Theorem \ref{thm:HR-bound} is straightforward.

\subsection*{Proof of Theorem \ref{thm:HR-bound}}

Let $u\in(0,n)$ and let $\mu$ be a finite Borel measure with
$I_u(\mu)<\infty$.  By the upper bound in \eqref{eq:HR-energy},
\begin{equation}\label{eq:weighted-sum-bound}
  \sum_{j=0}^\infty (1+\lambda_j^2)^{-(n-u)/2}
     \Bigl|\int_M e_j\,d\mu\Bigr|^2
  \le c_{n,u}\, I_u(\mu).
\end{equation}

Now fix $\lambda\ge1$.  Since the weight
$(1+\lambda_j^2)^{-(n-u)/2}$ is decreasing in $\lambda_j$, we have
\[
  (1+\lambda_j^2)^{-(n-u)/2}
  \ge (1+\lambda^2)^{-(n-u)/2}
  \qquad\text{whenever }\lambda_j\le\lambda,
\]
and hence
\[
  \sum_{\lambda_j\le\lambda} \Bigl|\int_M e_j\,d\mu\Bigr|^2
  \le (1+\lambda^2)^{\frac{n-u}{2}}
      \sum_{j=0}^\infty (1+\lambda_j^2)^{-(n-u)/2}
        \Bigl|\int_M e_j\,d\mu\Bigr|^2.
\]
Combining this with \eqref{eq:weighted-sum-bound} gives
\[
  \sum_{\lambda_j\le\lambda} \Bigl|\int_M e_j\,d\mu\Bigr|^2
  \le C_1(M,g,u,\mu)\,(1+\lambda^2)^{\frac{n-u}{2}}
  \le C_1(M,g,u,\mu)\,\lambda^{\,n-u}
\]
for all $\lambda\ge1$, where we may take
\[
  C_1(M,g,u,\mu)
  := 2^{\frac{n-u}{2}}c_{n,u}\,I_u(\mu).
\]
This proves the first assertion of Theorem \ref{thm:HR-bound}.

To obtain the second statement of Theorem \ref{thm:HR-bound}, it remains to note
that if $\mu$ has correlation dimension $\dim_C(\mu)=s$ for some $s\in(0,n)$,
then $I_u(\mu)<\infty$ for every $u<s$. This is a standard estimate, and is the
reason why the correlation dimension is sometimes called the energy dimension.
We include a brief argument in our setting for completeness.

\begin{lemma}\label{lem:finite-energy}
Let $\mu$ be a finite Borel measure on $M$ with correlation dimension
$\dim_C(\mu)=s$ for some $s\in(0,n)$, and let $u\in(0,s)$. Then $I_u(\mu)<\infty$.
\end{lemma}

\begin{proof}
Recall that
\[
  F(r)
  = (\mu\times\mu)\bigl(\{(x,y): d_g(x,y)\le r\}\bigr)
  = \int_M \mu(B(x,r))\,d\mu(x).
\]
Using the layer-cake representation, we have
\[
  I_u(\mu)
  = \int_0^{\diam(M)} u r^{-u-1} F(r)\,dr + \mu(M)^2\,\diam(M)^{-u}.
\]

Fix $\varepsilon>0$ so that $u<s-\varepsilon$. Since $\dim_C(\mu)=s$, there exist
constants $C>0$ and $r_0>0$ such that
\[
  F(r)\le C r^{s-\varepsilon}\qquad(0<r\le r_0).
\]
Split the integral at $r_0$:
\[
  I_u(\mu)
  = u\int_0^{r_0} r^{-u-1}F(r)\,dr
    + u\int_{r_0}^{\diam(M)} r^{-u-1}F(r)\,dr
    + \mu(M)^2\,\diam(M)^{-u}.
\]
On $(0,r_0]$,
\[
  u\int_0^{r_0} r^{-u-1}F(r)\,dr
  \le uC\int_0^{r_0} r^{s-\varepsilon-u-1}\,dr
  <\infty,
\]
since $s-\varepsilon-u>0$. On $[r_0,\diam(M)]$,
\[
  u\int_{r_0}^{\diam(M)} r^{-u-1}F(r)\,dr
  \le u\mu(M)^2\int_{r_0}^{\diam(M)} r^{-u-1}\,dr
  <\infty.
\]
Thus $I_u(\mu)<\infty$ for every $u<s$, as claimed.
\end{proof}

Applying Lemma \ref{lem:finite-energy} with any fixed $u<s$ shows
that the first part of Theorem \ref{thm:HR-bound} applies to $\mu$ and yields
a constant $C_2=C_2(M,g,s,u,\mu)$ such that
\[
  \sum_{\lambda_j\le\lambda} \Bigl|\int_M e_j\,d\mu\Bigr|^2
  \le C_2\,\lambda^{\,n-u}
  \qquad (\lambda\ge1),
\]
without any density assumption.  This completes the proof of
Theorem \ref{thm:HR-bound}.

\begin{remark}
Theorem \ref{thm:HR-bound} shows that for each fixed $u<s$ one can control the
growth of the Kuznecov sum by $\lambda^{n-u}$ under the mild hypothesis
$I_u(\mu)<\infty$.  Our
main theorem, Theorem \ref{thm:main}, goes further by identifying the sharp
exponent $n-s$ and the exact leading constant in the asymptotic
\eqref{eq:Kuz-main}, under the additional assumption of the 
averaged $s$-density condition.  As discussed in the introduction, the methods
of \cite{HareRoginskaya03} treat $u$ as a fixed parameter
and do not provide the uniform control in $u$ near $s$ that is needed to
pass to the endpoint $u=s$. This is why we turn to a more refined analysis
based on the heat kernel and Tauberian theory.
\end{remark}

\section{Sharpness and examples}\label{sec:sharpness}

In this section, we discuss the sharpness of Theorem \ref{thm:main} and provide a few examples. We explain
the necessity of the averaged $s$-density condition \eqref{eq:DD} for obtaining a
genuine one-term asymptotic, and we show that the remainder $o(\lambda^{n-s})$ in
\eqref{eq:Kuz-main} is essentially optimal at this level of generality. In
particular, it cannot be improved uniformly to a power-saving error term, even
over natural regular subclasses of measures. In the final part of this section we present an explicit self-similar fractal measure
whose distance distribution $F(r)$ admits an averaged $s$-density constant, and hence
satisfies \eqref{eq:DD}.

\subsection{Necessity of the averaged $s$-density condition}\label{subsec:necessity-density}

In this subsection we prove Proposition \ref{prop:need-DD}, which shows that the
averaged $s$-density hypothesis in Theorem \ref{thm:main} is not merely a
technical convenience. Without a stabilization of the small-scale behavior of
$\mu$, one cannot in general expect the normalized quantities
$N_\mu(\lambda)/\lambda^{n-s}$ to converge.

Recall the distance distribution
\[
  F(r):=(\mu\times\mu)\bigl(\{(x,y):d_g(x,y)\le r\}\bigr),\qquad r\ge0,
\]
and the Gaussian average
\[
  G(t):=\iint e^{-d_g(x,y)^2/(4t)}\,d\mu(x)\,d\mu(y)
       =\int_0^{\diam(M)} e^{-r^2/(4t)}\,dF(r).
\]
Define the normalized coefficient
\[
  a(r):=\frac{F(r)}{r^s}\qquad(r>0).
\]
If $\mu$ is $s$-Ahlfors--David regular then $a(r)$ is bounded above and below by
positive constants for $0<r\le r_0$.

Since $F(0)=0$, an integration by parts yields, up to a boundary term that decays exponentially in $1/t$,
\begin{equation}\label{eq:G-average-a}
  G(t)
  = \frac{1}{2t}\int_0^{\diam(M)} r e^{-r^2/(4t)}\,F(r)\,dr.
\end{equation}
Changing variables $r=\sqrt{t}\,u$ yields
\begin{equation}\label{eq:G-average-a2}
  \frac{G(t)}{t^{s/2}}
  = \int_0^{\diam(M)/\sqrt t} w_s(u)\,a(\sqrt t\,u)\,du,
  \qquad
  w_s(u):=\frac12\,u^{s+1}e^{-u^2/4}.
\end{equation}
The weight $w_s$ is positive and integrable on $(0,\infty)$, with
\[
  \kappa_s:=\int_0^\infty w_s(u)\,du
  = \frac12\int_0^\infty u^{s+1}e^{-u^2/4}\,du
  = s\,2^{s-1}\Gamma\Bigl(\frac{s}{2}\Bigr).
\]

Let
\[
  a_-:=\liminf_{r\to0^+} a(r),\qquad a_+:=\limsup_{r\to0^+} a(r).
\]
Using \eqref{eq:G-average-a2}, the boundedness of $a$, and $\int_0^\infty w_s<\infty$,
one obtains the general bounds
\begin{equation}\label{eq:G-liminf-limsup-general}
  \kappa_s\,a_-
  \le \liminf_{t\to0^+}\frac{G(t)}{t^{s/2}}
  \le \limsup_{t\to0^+}\frac{G(t)}{t^{s/2}}
  \le \kappa_s\,a_+.
\end{equation}
In general \eqref{eq:G-liminf-limsup-general} may be strict: the Gaussian
average is a \emph{weighted} average of the values of $a(r)$ on the scale
$r\approx\sqrt t$, so thin oscillations in $a$ can be smoothed.

However, in the model constructions relevant for sharpness one can arrange that
different limit points of $a(r)$ persist across long blocks of
scales.  In that setting the endpoints in \eqref{eq:G-liminf-limsup-general} are
realized along suitable subsequences.

\begin{lemma}[Realizing $\liminf/\limsup$]\label{lem:realize-liminf-limsup}
Assume $a$ is bounded on $(0,\diam (M)]$, and let
\[
  a_-:=\liminf_{r\to0^+} a(r),\qquad a_+:=\limsup_{r\to0^+} a(r).
\]
\begin{enumerate}
\item If there exist $r_m\to0^+$ and $U_m\to\infty$ such that
\[
  \sup_{\,r_m/U_m \le r \le U_m r_m}\,|a(r)-a_-|\longrightarrow 0,
\]
then with $t_m:=r_m^2$ one has
\[
  \frac{G(t_m)}{t_m^{s/2}}\longrightarrow \kappa_s\,a_-.
\]
\item If there exist $\rho_m\to0^+$ and $V_m\to\infty$ such that
\[
  \sup_{\,\rho_m/V_m \le r \le V_m \rho_m}\,|a(r)-a_+|\longrightarrow 0,
\]
then with $\tau_m:=\rho_m^2$ one has
\[
  \frac{G(\tau_m)}{\tau_m^{s/2}}\longrightarrow \kappa_s\,a_+.
\]
\end{enumerate}
\end{lemma}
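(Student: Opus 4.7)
The plan is to start from the representation \eqref{eq:G-average-a2}, which expresses $G(t)/t^{s/2}$ as the integral of $a(\sqrt{t}\,u)$ against the fixed weight $w_s(u)=\tfrac12 u^{s+1}e^{-u^2/4}$. This weight is integrable on $(0,\infty)$ with total mass $\kappa_s$: it vanishes like $u^{s+1}$ at $0$ (integrable since $s>0$) and decays like a Gaussian at infinity. Combined with the standing hypothesis that $a$ is bounded, this reduces the lemma to showing that in the limit, only the values of $a$ on the specified symmetric blocks of scales around $r_m$ (resp.\ $\rho_m$) contribute to the integral.

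For part (1), I would set $t=t_m=r_m^2$ in \eqref{eq:G-average-a2}, so that $\sqrt{t_m}=r_m$ and the substitution $r=r_m u$ maps the hypothesized block $[r_m/U_m,\,U_m r_m]$ exactly onto $[1/U_m,\,U_m]$ in the $u$-variable. I would then split
\[
  \frac{G(t_m)}{t_m^{s/2}}
  =\int_0^{\diam(M)/r_m} w_s(u)\,a(r_m u)\,du
  =I_1(m)+I_2(m)+I_3(m),
\]
where $I_1,I_2,I_3$ are the integrals over $(0,1/U_m)$, $[1/U_m,U_m]$, and $(U_m,\diam(M)/r_m)$ respectively. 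For the two tails, boundedness of $a$ together with integrability of $w_s$ at $0$ and $\infty$ gives
\[
  |I_1(m)|+|I_3(m)|
  \le \|a\|_\infty\Bigl(\int_0^{1/U_m}w_s+\int_{U_m}^\infty w_s\Bigr)
  \longrightarrow 0
\]
as $U_m\to\infty$. For the main block, the uniform approximation
$\sup_{[r_m/U_m,U_m r_m]}|a(r)-a_-|\to 0$ translates directly into
$\sup_{u\in[1/U_m,U_m]}|a(r_m u)-a_-|\to 0$, so that
\[
  I_2(m)=a_-\int_{1/U_m}^{U_m}w_s(u)\,du+o(1)
  \longrightarrow a_-\kappa_s
\]
by monotone convergence of the inner integral to $\kappa_s$. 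Summing the three contributions gives $G(t_m)/t_m^{s/2}\to\kappa_s a_-$. Part (2) is proved verbatim with $(a_-,r_m,U_m,t_m)$ replaced by $(a_+,\rho_m,V_m,\tau_m)$.

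The only mild subtlety, and what I would flag as the real bookkeeping point rather than a genuine obstacle, is that one must keep track of two simultaneous limits: $r_m\to 0$ and $U_m\to\infty$. The hypothesis has been packaged so that both scalings are controlled through a single sequential uniform bound, so no diagonal argument is needed, and the exponentially small boundary term discarded in passing from integration by parts to \eqref{eq:G-average-a} is trivially absorbed since it is $O(e^{-c/t_m})$ while we divide by $t_m^{s/2}$. Modulo these bookkeeping remarks, the proof is a clean three-part split into a main term and two vanishing tails.
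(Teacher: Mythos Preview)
Your proof is correct and matches the paper's argument essentially line for line: the same use of \eqref{eq:G-average-a2}, the same three-piece split of $(0,\diam(M)/r_m)$ into $(0,1/U_m)\cup[1/U_m,U_m]\cup(U_m,\diam(M)/r_m)$, the same tail bounds via $\|a\|_\infty$ and integrability of $w_s$, and the same handling of the main block via the uniform closeness of $a(r_m u)$ to $a_-$ together with $\int_{1/U_m}^{U_m}w_s\to\kappa_s$. Your remark about the exponentially small boundary term is a harmless extra precaution; the paper silently absorbs it in the same way.
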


\begin{proof}
We prove (1); (2) is identical.  Put $t_m=r_m^2$ and use \eqref{eq:G-average-a2}:
\[
  \frac{G(t_m)}{t_m^{s/2}}
  = \int_0^{\diam(M)/r_m} w_s(u)\,a(r_m u)\,du.
\]
Split the integral into three pieces:
\[
  \int_0^{\diam(M)/r_m}
  = \int_0^{1/U_m}+\int_{1/U_m}^{U_m}+\int_{U_m}^{\diam(M)/r_m}.
\]
On $u\in[1/U_m,U_m]$ we have $r_m u\in[r_m/U_m,\,U_m r_m]$, hence
$a(r_m u)\to a_-$ uniformly, so
\[
  \int_{1/U_m}^{U_m} w_s(u)\,a(r_m u)\,du
  = a_-\int_{1/U_m}^{U_m} w_s(u)\,du + o(1).
\]
Since $a$ is bounded, say $|a|\le K$ on $(0,r_0]$, the tails satisfy
\[
  \left|\int_0^{1/U_m} w_s(u)\,a(r_m u)\,du\right|
  \le K\int_0^{1/U_m} w_s(u)\,du \longrightarrow 0,
\]
because $w_s(u)\approx u^{s+1}$ near $0$, and
\[
  \left|\int_{U_m}^{\diam(M)/r_m} w_s(u)\,a(r_m u)\,du\right|
  \le K\int_{U_m}^{\infty} w_s(u)\,du \longrightarrow 0,
\]
because $w_s\in L^1(0,\infty)$ and $U_m\to\infty$.  Finally,
\[
  \int_{1/U_m}^{U_m} w_s(u)\,du \longrightarrow \int_0^\infty w_s(u)\,du=\kappa_s,
\]
so $G(t_m)/t_m^{s/2}\to \kappa_s a_-$.
\end{proof}

Lemma \ref{lem:realize-liminf-limsup} shows that if $a(r)$ has two distinct limit
points $a_-<a_+$ and one can arrange two sequences of logarithmic blocks on which
$a(r)$ stays uniformly close to $a_-$ and $a_+$, respectively, then
$G(t)/t^{s/2}$ has two distinct subsequential limits $\kappa_s a_-$ and $\kappa_s a_+$,
and hence does not converge.  By the near-diagonal reduction in
Section \ref{sec:heat}, this forces $t^{(n-s)/2}H_\mu(t)$, and therefore
$N_\mu(\lambda)/\lambda^{n-s}$, to fail to converge as well.  This explains why
some stabilization of $F(r)/r^s$ is needed for a
genuine one-term asymptotic.

\subsection{Optimality of the remainder}\label{subsec:optimal-remainder}

In the smooth submanifold case, Zelditch's Kuznecov formula \eqref{eq:Zelditch}
comes with an $O(\lambda^{n-k-1})$ remainder term. Moreover, for generic metrics
this remainder can be improved to $o(\lambda^{n-k-1})$ by a recent result of
Kaloshin, Wyman and the author \cite{kaloshin2025kuznecov}. In contrast, the
asymptotic formula in Theorem \ref{thm:main} is stated with an
$o(\lambda^{n-s})$ remainder.

It is natural to ask whether one can strengthen \eqref{eq:Kuz-main} to a
\emph{uniform} power-saving error term of the form
\begin{equation}\label{eq:uniform-powersaving}
  N_\mu(\lambda)=C_{n,s}A_\mu\,\lambda^{n-s}+O(\lambda^{n-s-\delta})
  \qquad(\lambda\to\infty),
\end{equation}
for some $\delta>0$ depending only on $(M,g,n,s)$ and valid for all measures $\mu$
satisfying the hypotheses of Theorem \ref{thm:main}. Proposition \ref{thm:no-uniform-ps} asserts that such a uniform improvement is
not available in general, even under additional regularity assumptions, and we
prove it in this subsection.

\medskip

\noindent\textbf{Step 1: from $N_\mu$ to the Laplace--Stieltjes transform $H_\mu$.}
Write $\mathcal N(S):=N_\mu(\sqrt S)$ as in \eqref{eq:Sigma-vs-N}, so that
\[
  H_\mu(t)=\int_{[0,\infty)} e^{-tS}\,d\mathcal N(S).
\]
If \eqref{eq:uniform-powersaving} holds for some $\delta>0$, then
\[
  \mathcal N(S)=C_{n,s}A_\mu\,S^{\frac{n-s}{2}}+O\Bigl(S^{\frac{n-s-\delta}{2}}\Bigr),
\]
and a standard Abelian (integration-by-parts) computation yields
\begin{equation}\label{eq:Abelian-remainder}
  H_\mu(t)
  = C_{n,s}A_\mu\,\Gamma\Bigl(\frac{n-s}{2}+1\Bigr)\,t^{-\frac{n-s}{2}}
    + O\Bigl(t^{-\frac{n-s-\delta}{2}}\Bigr)
  \qquad(t\to0^+).
\end{equation}

\medskip

\noindent\textbf{Step 2: from $H_\mu$ to the Gaussian average $G(t)$.}
Recall
\[
  G(t):=\iint e^{-d_g(x,y)^2/(4t)}\,d\mu(x)\,d\mu(y).
\]
Inspecting the proof of Lemma \ref{lem:Hmu-asympt} (using the Lipschitz bound
\eqref{eq:u0-Lipschitz}, the estimate $J(t)\lesssim t^{(s+1)/2}$, and choosing an
integer $N\ge \lceil (s+1)/2\rceil$ in the local heat-kernel parametrix) yields the
quantitative refinement
\begin{equation}\label{eq:H-vs-G-rate}
  H_\mu(t)=(4\pi t)^{-n/2}G(t) + O\left(t^{-\frac{n-s}{2}+\frac12}\right)
  \qquad(t\to0^+).
\end{equation}
Combining \eqref{eq:Abelian-remainder} and \eqref{eq:H-vs-G-rate}, and using that
$C_{n,s}=\gamma_{n,s}/\Gamma(\frac{n-s}{2}+1)$ and $\gamma_{n,s}=(4\pi)^{-n/2}\gamma_s$,
one obtains (for $\delta\in(0,1)$)
\begin{equation}\label{eq:G-power-saving}
  \frac{G(t)}{t^{s/2}}
  = \gamma_s A_\mu + O(t^{\delta/2})
  \qquad(t\to0^+),
\end{equation}
where $\gamma_s$ is as in Lemma \ref{lem:Gaussian-F}.  Thus, a uniform power saving
in \eqref{eq:uniform-powersaving} would force a uniform power saving in $t$ for
$G(t)/t^{s/2}$.

\medskip

\noindent\textbf{Step 3: from $G(t)$ to $F(r)$.}
Recall $F(r):=(\mu\times\mu)\{(x,y):d_g(x,y)\le r\}$ and set
\[
  q(r):=\frac{F(r)}{\vol(B^s)\,A_\mu\,r^s}.
\]
Then by definition, $q(r)\to1$
as $r\to0^+$.  Integrating by parts in
$G(t)=\int_0^{\diam(M)} e^{-r^2/(4t)}\,dF(r)$ and changing variables $r=\sqrt t\,u$
yields
\begin{equation}\label{eq:G-average-q}
  \frac{G(t)}{\vol(B^s)\,A_\mu\,t^{s/2}}
  = \int_0^{\diam(M)/\sqrt t} w_s(u)\,q(\sqrt t\,u)\,du + O(e^{-c/t}),
\end{equation}
where
\[
  w_s(u):=\frac12\,u^{s+1}e^{-u^2/4}
\]
is strictly positive and integrable on $(0,\infty)$.

\begin{lemma}\label{lem:block-deviation-G}
Assume $q(r)\to1$ as $r\to0^+$ and $\|q\|_{L^\infty(0,\diam(M)]}\le C_0$ for some $C_0\ge1$.
Fix $\eta\in(0,1)$ and choose $U=U(\eta)\ge2$ so that
\begin{equation}\label{eq:choose-U}
  \int_U^\infty w_s(u)\,du \le \frac{\eta}{8C_0}\int_1^U w_s(u)\,du.
\end{equation}
Fix $r>0$, put $t=r^2$, and assume that $\text{for all }\rho\in[r,Ur].$
\[
  q(\rho)\le 1-\eta.
\]
Then for all sufficiently small $r$ one has
\[
  \left|\frac{G(t)}{t^{s/2}}-\gamma_sA_\mu\right|
  \ge c_s\,A_\mu\,\eta\int_1^U w_s(u)\,du,
\]
where $c_s>0$ depends only on $s$.
\end{lemma}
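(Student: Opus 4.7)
\medskip

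\noindent\textbf{Plan of proof of Lemma~\ref{lem:block-deviation-G}.}
The plan is to read off the deviation of $G(t)/t^{s/2}$ from $\gamma_sA_\mu$ directly from the integral representation \eqref{eq:G-average-q} by a three-block decomposition of the $u$-integral, arranged to match the three hypotheses: the normalization $q(r)\to 1$, the uniform lower-density gap on $[r,Ur]$, and the $L^\infty$ bound $\|q\|_\infty\le C_0$.

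First I would rewrite \eqref{eq:G-average-q} as
\[
  \frac{G(t)}{\vol(B^s)A_\mu\,t^{s/2}}-\kappa_s
  = \int_0^{\diam(M)/\sqrt t} w_s(u)\bigl(q(\sqrt t\,u)-1\bigr)\,du
    - \int_{\diam(M)/\sqrt t}^{\infty} w_s(u)\,du
    + O(e^{-c/t}),
\]
using $\kappa_s=\int_0^\infty w_s$. The two trailing terms are $o(1)$ as $t\to 0^+$ because $w_s\in L^1(0,\infty)$ and $\diam(M)/\sqrt t\to\infty$. With $t=r^2$ the substitution $\rho=\sqrt t\,u=ru$ converts ranges of $u$ into ranges of $\rho$, so on $u\in[1,U]$ we land exactly in the block $[r,Ur]$ where the hypothesis gives $q\le 1-\eta$.

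Next I would split the main integral as $\mathcal I_1+\mathcal I_2+\mathcal I_3$ over $[0,1]$, $[1,U]$, and $[U,\diam(M)/\sqrt t]$ respectively. On $[1,U]$ the hypothesis gives the clean one-sided estimate
\[
  \mathcal I_2 \le -\eta\int_1^U w_s(u)\,du.
\]
On $[U,\infty)$, since $|q-1|\le C_0+1\le 2C_0$ pointwise, the calibration \eqref{eq:choose-U} of $U$ yields
\[
  |\mathcal I_3|\le 2C_0\int_U^\infty w_s(u)\,du \le \tfrac{\eta}{4}\int_1^U w_s(u)\,du.
\]
On $[0,1]$ I would use the hypothesis $q(\rho)\to 1$ as $\rho\to 0^+$: set $\varepsilon(r):=\sup_{0<\rho\le r}|q(\rho)-1|$, so that $\varepsilon(r)\to 0$, and therefore
\[
  |\mathcal I_1|\le \varepsilon(r)\int_0^1 w_s(u)\,du.
\]
Taking $r$ small enough that $\varepsilon(r)\int_0^1 w_s\le\tfrac{\eta}{8}\int_1^U w_s$, and absorbing the $o(1)$ error of the two trailing terms into another $\tfrac{\eta}{8}\int_1^U w_s$, I combine to get
\[
  \frac{G(t)}{\vol(B^s)A_\mu\,t^{s/2}}-\kappa_s
  \le -\eta\int_1^U w_s+\tfrac{\eta}{4}\int_1^U w_s+\tfrac{\eta}{4}\int_1^U w_s
  = -\tfrac{\eta}{2}\int_1^U w_s(u)\,du.
\]
Multiplying through by $\vol(B^s)A_\mu$ and using $\vol(B^s)\kappa_s=\gamma_s$, the conclusion follows with $c_s=\vol(B^s)/2$.

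The argument is essentially calibration-driven rather than delicate: the only real choice is the three-block decomposition, and the three contributions are controlled by, respectively, the stated smallness of $\varepsilon(r)$, the block hypothesis, and the choice \eqref{eq:choose-U} of $U$. The mildly non-routine point is keeping track of the two cutoffs: $U$ is chosen first, depending only on $\eta$ and $C_0$, and only then is $r$ taken small enough (depending on $\eta$ and $U$) so that both $\varepsilon(r)\int_0^1 w_s$ and the tail errors $\int_{\diam(M)/\sqrt t}^\infty w_s+O(e^{-c/t})$ sit below $\tfrac{\eta}{8}\int_1^U w_s$. No further analytic input beyond $q(\rho)\to 1$, the block hypothesis, and $w_s\in L^1$ is needed.
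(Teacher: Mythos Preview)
Your proof is correct and follows essentially the same route as the paper's: the identical three-block decomposition $[0,1]\cup[1,U]\cup[U,\infty)$ of the $u$-integral in \eqref{eq:G-average-q}, with the block hypothesis controlling $[1,U]$, the calibration \eqref{eq:choose-U} controlling $[U,\infty)$, and $q(\rho)\to1$ controlling $[0,1]$. The only cosmetic differences are that you track the one-sided sign explicitly (so the deviation comes out negative, then you take absolute value) and that you quantify the $[0,1]$ piece via $\varepsilon(r)=\sup_{0<\rho\le r}|q(\rho)-1|$ rather than invoking dominated convergence; both yield the same constant $c_s=\vol(B^s)/2$.
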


\begin{proof}
Let $\kappa_s:=\int_0^\infty w_s(u)\,du$, so $\gamma_s=\vol(B^s)\kappa_s$.  Using
\eqref{eq:G-average-q} with $t=r^2$ and absorbing the exponentially small term, we get
\[
  \frac{G(r^2)}{\vol(B^s)\,A_\mu\,r^s}-\kappa_s
  = \int_0^\infty w_s(u)\bigl(q(ru)-1\bigr)\,du + o(1)
  \qquad (r\to0^+).
\]
Split the integral into $u\in(0,1)$, $u\in[1,U]$, and $u\in[U,\infty)$.

On $[1,U]$ we have $ru\in[r,Ur]$, so $q(ru)-1$ has fixed sign and $|q(ru)-1|\ge\eta$.
Hence
\[
  \left|\int_1^U w_s(u)\bigl(q(ru)-1\bigr)\,du\right|
  \ge \eta\int_1^U w_s(u)\,du.
\]
On $[U,\infty)$ we use $|q-1|\le C_0+1\le 2C_0$ and \eqref{eq:choose-U}:
\[
  \left|\int_U^\infty w_s(u)\bigl(q(ru)-1\bigr)\,du\right|
  \le 2C_0\int_U^\infty w_s(u)\,du
  \le \frac{\eta}{4}\int_1^U w_s(u)\,du.
\]
Finally, since $q(\rho)\to1$ as $\rho\to0^+$ and $w_s\in L^1(0,1)$, dominated convergence gives
\[
  \int_0^1 w_s(u)\bigl(q(ru)-1\bigr)\,du \longrightarrow 0
  \qquad (r\to0^+).
\]
Therefore for all sufficiently small $r$,
\[
  \left|\int_0^\infty w_s(u)\bigl(q(ru)-1\bigr)\,du\right|
  \ge \frac{\eta}{2}\int_1^U w_s(u)\,du.
\]
Multiplying by $\vol(B^s)A_\mu$ and using $\gamma_s=\vol(B^s)\kappa_s$ yields the claim.
\end{proof}

\medskip

\noindent\textbf{Step 4: constructing arbitrarily slow convergence.}
We now construct an explicit example (for a fixed $\delta>0$) for which no
uniform remainder $O(\lambda^{n-s-\delta})$ can hold.  For simplicity, we take $s$ to be an integer, and
\[
  M=\mathbb T^n,\qquad H=\mathbb T^s\subset\mathbb T^n
\]
with the flat metric, so that $dV_H$ is Lebesgue measure on $\mathbb T^s$ and,
for all $\rho\in(0,1/10]$, the intrinsic ball $B_H(x,\rho)$ is a translate of
the Euclidean ball. We remark that similar constructions can be made for a general manifold--submanifold pair $(M,H)$.
 Let
\[
  K_\rho(y):=\mathbf 1_{B_H(0,\rho)}(y),\qquad
  \mathcal A_\rho f(x):=\frac{1}{\vol(B^s)\rho^s}\int_{\mathbb T^s} f(x-y)K_\rho(y)\,dV_H(y).
\]
Then whenever $d\mu=w\,dV_H$ we have
\[
  \frac{F(\rho)}{\vol(B^s)\rho^s}
  =\int_H w(x)\,\mathcal A_\rho w(x)\,dV_H(x).
\]

Choose frequencies $k_m\in\mathbb Z^s\setminus\{0\}$ such that $|k_m|$ is strictly
increasing and $k_m\neq \pm k_{m'}$ for $m\neq m'$, and define
\[
  \phi_m(x):=\cos(2\pi k_m\cdot x),\qquad
  \|\phi_m\|_{L^\infty(H)}\le 1,\qquad
  \int_H \phi_m\,dV_H=0,\qquad
  \int_H \phi_m^2\,dV_H=\tfrac12\,\vol(H).
\]
For $\rho\in(0,1/10]$ the averaging convolution operator $\mathcal A_\rho$ acts diagonally on
Fourier modes: there exists a continuous function $m_s:[0,\infty)\to\mathbb R$
with
\[
  m_s(0)=1,\qquad \lim_{\tau\to\infty} m_s(\tau)=0,\qquad |m_s(\tau)|\le 1,
\]
such that
\[
  \mathcal A_\rho \phi_m = m_s(2\pi |k_m|\rho)\,\phi_m .
\]

Fix amplitudes $a_m>0$ such that
\begin{equation}\label{eq:am-fourier}
  \sum_{m\ge1} a_m\le \tfrac14,
  \qquad\text{and hence}\qquad
  \sum_{m\ge1} a_m^2<\infty.
\end{equation}
Define
\[
  w(x):=1+\sum_{m\ge1} a_m\,\phi_m(x),
  \qquad\text{and}\qquad
  d\mu:=w\,dV_H.
\]
Then $\tfrac34\le w\le \tfrac54$, so $\mu$ is $s$-Ahlfors--David regular. Moreover, since
$\mu$ is absolutely continuous with respect to $dV_H$ with bounded density $w$,
$\mu$ admits a pointwise $s$-density $\theta=w$.  Hence, by Remark \ref{rem:density-implies-DD},
\[
  A_\mu=\int_H \theta\,d\mu=\int_H w^2\,dV_H.
\]

Using orthogonality of distinct Fourier modes,
\begin{equation}\label{eq:A-mu-fourier}
  A_\mu
  =\int_H w^2\,dV_H
  =\vol(H)+\frac12\,\vol(H)\sum_{m\ge1} a_m^2 .
\end{equation}
Moreover, for $\rho\in(0,1/10]$,
\begin{align}\label{eq:F-fourier}
  \frac{F(\rho)}{\vol(B^s)\rho^s}
  &=\int_H w\,\mathcal A_\rho w\,dV_H \notag\\
  &=\vol(H)
    +\frac12\,\vol(H)\sum_{m\ge1} a_m^2\,m_s(2\pi |k_m|\rho).
\end{align}
Therefore
\begin{equation}\label{eq:q-fourier}
  q(\rho)
  =\frac{F(\rho)}{\vol(B^s)A_\mu\,\rho^s}
  =\frac{1+\frac12\sum_{m\ge1} a_m^2\,m_s(2\pi |k_m|\rho)}
         {1+\frac12\sum_{m\ge1} a_m^2}.
\end{equation}
Since $\sum a_m^2<\infty$ and $|m_s|\le 1$, dominated convergence and $m_s(0)=1$
give $q(\rho)\to 1$ as $\rho\to0^+$.

Choose numbers $0<\varepsilon_0<1<L_0$ such that
\begin{equation}\label{eq:m-thresholds}
  |m_s(\tau)-1|\le \tfrac1{100}\quad\text{for }0\le \tau\le \varepsilon_0,
  \qquad
  |m_s(\tau)|\le \tfrac1{100}\quad\text{for }\tau\ge L_0.
\end{equation}
(This is possible by continuity at $0$ and $\lim_{\tau\to\infty}m_s(\tau)=0$.)

For each $m$ set
\[
  \eta_m:=\frac{1}{100}\,a_m^2,
\]
and let $U_m:=U(\eta_m)\ge2$ be the scale parameter given by
Lemma \ref{lem:block-deviation-G}.  Define
\[
  r_m:=\frac{L_0}{2\pi |k_m|}\,.
\]
We choose the frequencies inductively so that $|k_m|$ is strictly increasing and
\begin{equation}\label{eq:freq-sep}
  2\pi |k_{m-1}|\,U_m r_m \le \varepsilon_0,
  \qquad
  U_m r_m \le \tfrac1{10},
  \qquad
  r_m^\delta \le \frac{1}{10m}\,\eta_m .
\end{equation}
This is possible because once $a_m$ (hence $\eta_m$ and $U_m$) is fixed, we are
free to take $|k_m|$ arbitrarily large, which makes $r_m$ arbitrarily small.

Fix $m$ and take any $\rho\in[r_m,U_m r_m]$.  Then
\[
  2\pi |k_m|\rho \ge 2\pi |k_m|r_m = L_0,
\]
so \eqref{eq:m-thresholds} gives $|m_s(2\pi |k_m|\rho)|\le 1/100$, and the same
bound holds for all $j\ge m$ since $|k_j|\ge |k_m|$.  On the other hand,
\eqref{eq:freq-sep} implies, for $j<m$,
\[
  2\pi |k_j|\rho \le 2\pi |k_{m-1}|\,U_m r_m \le \varepsilon_0,
\]
so \eqref{eq:m-thresholds} gives $m_s(2\pi |k_j|\rho)\ge 1-1/100$ for all $j<m$.

Let $A:=\sum_{j<m}a_j^2$ and $T:=\sum_{j\ge m}a_j^2$ (so $T\ge a_m^2$).  Using the
bounds above in \eqref{eq:q-fourier} yields, for $\rho\in[r_m,U_m r_m]$,
\[
  q(\rho)
  \le
  \frac{1+\frac12\bigl(A+\tfrac1{100}T\bigr)}
       {1+\frac12(A+T)}
  \le
  1-\frac{1-\tfrac1{100}}{1+\frac12(A+T)}\cdot \frac{T}{2}.
\]
Since $\sum_{m\ge1}a_m^2\le \sum_{m\ge1}a_m\le \tfrac14$, we have $A+T\le \tfrac14$
and hence $1+\frac12(A+T)\le \tfrac98$.  Therefore
\[
  q(\rho)\le 1-\frac{4}{9}\Bigl(1-\tfrac1{100}\Bigr)T
  \le 1-\eta_m,
\]
so $q(\rho)\le 1-\eta_m$ on the entire block $[r_m,U_m r_m]$.

Applying Lemma \ref{lem:block-deviation-G} with $r=r_m$ and $\eta=\eta_m$ gives
\[
  \left|\frac{G(r_m^2)}{(r_m^2)^{s/2}}-\gamma_sA_\mu\right|
  =\left|\frac{G(r_m^2)}{r_m^{s}}-\gamma_sA_\mu\right|
  \ge c_s\,A_\mu\,\eta_m .
\]
Finally, the last condition in \eqref{eq:freq-sep} implies
\[
  \frac{\eta_m}{(r_m^2)^{\delta/2}}
  =\frac{\eta_m}{r_m^\delta}\ge 10m\longrightarrow\infty,
\]
so no estimate of the form
$\bigl|G(t)/t^{s/2}-\gamma_sA_\mu\bigr|=O(t^{\delta/2})$ can hold as $t\to0^+$.
By the implication \eqref{eq:G-power-saving}, this contradicts the consequence of a
uniform remainder $O(\lambda^{n-s-\delta})$.  Therefore no uniform power saving
$O(\lambda^{n-s-\delta})$ can hold over our class, and the $o(\lambda^{n-s})$
remainder in Theorem \ref{thm:main} is essentially sharp.

\subsection{A self-similar example on the unit interval}\label{subsec:example-nonlattice}

We record an explicit self-similar fractal measure on $[0,1]$ of non-integer dimension
satisfying the averaged $s$-density condition \eqref{eq:DD}.

\begin{proposition}\label{prop:nonlattice-example}
Let $\phi_0,\phi_1:[0,1]\to[0,1]$ be
\[
  \phi_0(x)=\tfrac12 x,\qquad \phi_1(x)=\tfrac23+\tfrac13 x,
\]
and let $K\subset[0,1]$ be the attractor $K=\phi_0(K)\cup\phi_1(K)$.
Let $s\in(0,1)$ be the unique solution of $2^{-s}+3^{-s}=1$, and set
$p_0:=2^{-s}$, $p_1:=3^{-s}$.
Let $\mu$ be the unique Borel probability measure supported on $K$ satisfying
\begin{equation}\label{eq:selfsim-mu}
  \mu(A)=p_0\,\mu(\phi_0^{-1}(A))+p_1\,\mu(\phi_1^{-1}(A))
  \qquad\text{for all Borel }A\subset[0,1].
\end{equation}
Then $s\notin\mathbb N$, and $\mu$ admits an averaged $s$-density constant $A_\mu\in(0,\infty)$ in the
sense of \eqref{eq:DD}. 
\end{proposition}

\begin{proof}
It is clear that $s$ is not an integer. For $0<r<1/6$, pairs from different first-level pieces cannot satisfy $|x-y|\le r$, hence expanding
$\mu\times\mu$ using \eqref{eq:selfsim-mu} gives the exact recursion
\[
  F(r)=p_0^2\,F(2r)+p_1^2\,F(3r)\qquad(0<r<1/6).
\]
Setting $f(r):=F(r)/(\vol(B^s)\,r^s)$ and using $p_0=2^{-s}$, $p_1=3^{-s}$ yields
\[
  f(r)=p_0\,f(2r)+p_1\,f(3r)\qquad(0<r<1/6).
\]
With $t=-\log r$ and $g(t):=f(e^{-t})$, we obtain
\[
  g(t)=p_0\,g(t-\log 2)+p_1\,g(t-\log 3)\qquad(t>\log 6).
\]
Since $\log 2/\log 3\notin\mathbb Q$, the key renewal theorem (see, e.g. \cite{Feller1971Vol2}) implies that
$g(t)\to A_\mu$ as $t\to\infty$ for some $A_\mu\in(0,\infty)$, i.e. $f(r)\to A_\mu$ as $r\to0^+$,
which is exactly \eqref{eq:DD} with exponent $s$.
Finally, we remark that this measure $\mu$ in our example satisfies Ahlfors--David regularity.
\end{proof}

\bibliographystyle{alpha}
\bibliography{kuznecov_refs}

\end{document}